\let\mathcal\mathscr
\numberwithin{equation}{section}
\renewcommand{\phi}{\varphi}
\newcommand{\phid}{\varphi^*}
\newcommand{\PP}{\mathbb{P}}
\renewcommand{\AA}{\mathbb{A}}
\newcommand{\ZZ}{\mathbb{Z}}
\newcommand{\NN}{\mathbb{N}}
\newcommand{\QQ}{\mathbb{Q}}
\newcommand{\RR}{\mathbb{R}}
\newcommand{\CC}{\mathbb{C}}
\newcommand{\cA}{\mathcal{A}}
\newcommand{\cS}{\mathcal{S}}
\newtheorem{thm}{Theorem}
\newtheorem*{thm*}{Theorem}
\newtheorem{lemma}{Lemma}
\newtheorem*{cor*}{Corollary}
\newtheorem{prop}{Proposition}
\renewcommand{\leq}{\leqslant}
\renewcommand{\le}{\leqslant}
\renewcommand{\geq}{\geqslant}
\renewcommand{\ge}{\geqslant}
\newcommand{\ma}{\mathbf}
\newcommand{\x}{\mathbf{x}}
\newcommand{\al}{\alpha}
\newcommand{\e}{\ensuremath{\mathrm e}}
\newcommand{\bet}{\boldsymbol{\eta}}
\newcommand{\bal}{\boldsymbol{\alpha}}
\newcommand{\ve}{\varepsilon}
\DeclareMathOperator{\Spec}{Spec}
\DeclareMathOperator{\Cox}{Cox}
\theoremstyle{definition}
\newtheorem*{ack}{Acknowledgements}
\newcommand{\dif}{\mathrm{d}}
\begin{document}

\title[Inhomogeneous quadratic congruences]{Inhomogeneous quadratic
  congruences}

\author{S.\ Baier}
\address{
Mathematisches Institut\\
Universit\"at G\"ottingen\\
Bunsenstr.\ 3--5, 37073\\
G\"ottingen\\ Germany}
\email{sbaier@uni-math.gwdg.de}
\author{T.D.\ Browning}
\address{School of Mathematics\\
University of Bristol\\ Bristol\\ BS8 1TW\\ United Kingdom}
\email{t.d.browning@bristol.ac.uk}

\date{\today}

\begin{abstract}
For given positive integers $a,b,q$ we investigate
the density of solutions $(x,y)\in \ZZ^2$ to congruences 
$
ax+by^2\equiv 0 \bmod{q}
$,  and apply it to detect almost primes on a singular del Pezzo
surface of degree $6$.
\end{abstract}

\subjclass{11D45 (11G35, 11N35)}

\maketitle

\section{Introduction}\label{s:intro}

Let $a,b,q$ be non-zero integers with $q\geq 1$ and $(ab,q)=1$.
Let $e,f$ be coprime positive integers with $e\neq f$ 
 and let $X,Y\geq 1$.   A broad array
of problems in number theory can be reduced to estimating the number of solutions  
$(x,y)\in \ZZ^2$ to
congruences of the shape
$$
ax^e+b y^f\equiv 0 \bmod{q},
$$
with $0<x\leq X$ and $0<y\leq Y$.  It is often convenient to focus on
those solutions which  
are coprime to $q$. Let $M_{e,f}(X,Y;a,b,q)$ denote the total number of such solutions. 
A trivial upper bound is given by 
$$
M_{e,f}(X,Y;a,b,q) \ll q^{\ve}\left(\frac{XY}{q} +\min\{X,Y\}\right),
$$
for any $\ve>0$. Here the implied constant is allowed to depend at
most upon the choice of $\ve$, and upon the exponents $e$ and $f$,  
a convention that we adhere to for the remainder of this work. 
One is usually concerned with situations for which either of the ranges $X$ or
$Y$ is substantially smaller than the modulus $q$, where
sharper estimates are sought. 

This paper is inspired by work of Pierce
\cite{pierce}, together with our own recent contribution \cite{baier-browning} to the topic. 
In \cite[Theorem 3]{pierce}, under the assumption that $q$ is square-free  and $\max\{X,2Y\}\leq q$, 
it is shown that 
there is a constant $A=A(e,f)>0$ such that 
\begin{equation} \label{LiPi}
M_{e,f}(X,Y;1,-1,q)
\ll \tau(q)^{A}\left(
\frac{XY}{q} +\frac{X}{\sqrt{q}}
+
\sqrt{q} \log^{2} 2q
\right),
\end{equation}
where $\tau$ is the divisor function.  This estimate is used by Pierce
to obtain a non-trivial bound for the $3$-part $h_{3}(D)$ of the class
number of a quadratic number field $\QQ(\sqrt{D})$, when $|D|$ admits
a divisor of suitable magnitude.  
In \cite{baier-browning} a substantial improvement is obtained when
$(e,f)=(2,3)$ and $q$ is far from being square-free. This in turn is
used to study  the density of elliptic curves with square-free
discriminant and to verify the conjecture of Manin \cite{f-m-t} for
some singular del Pezzo surfaces. 

The above investigations of $M_{e,f}(X,Y;a,b,q)$ use the orthogonality
of additive characters to encode the divisibility condition in the
congruence.  The resulting complete exponential sums can be estimated
using the Weil bound when the modulus is square-free. The present work
is directed at the special case $(e,f)=(1,2)$, wherein the
exponential sums that arise are particularly simple to handle, being
quadratic Gauss sums.  We will establish  the
following refinement of \eqref{LiPi}.

\begin{thm} \label{t:fixedabqX} 
Let $a,b,q$ be non-zero integers with $q\geq 1$ and $(ab,q)=1$ and let $X,Y\geq 1$. 
Then we have
$$
M_{1,2}(X,Y;a,b,q)=
\frac{\varphi(q)}{q^2} \cdot XY +
O\left(\frac{X}{q} \cdot \tau(q)+L(q)\sigma_{-1/2}(q)\left(\frac{Y}{\sqrt{q}}\cdot \tau(q)
+\sqrt{q}L(q)\right)\right), 
$$
where $L(n):=\log (n+1)$, $\sigma_{\alpha}(n):=\sum_{d\mid n}d^{\alpha}$
and $\phi$ is the Euler totient function.
\end{thm}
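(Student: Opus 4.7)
The plan is to apply orthogonality of additive characters modulo $q$ to detect the congruence, reducing the count to an estimate on quadratic exponential sums. Since $(ab,q)=1$, the condition $(x,q)=1$ is forced by $(y,q)=1$ together with the congruence, so writing $e_q(t) := \exp(2\pi i t/q)$ one has
\[
M_{1,2}(X,Y;a,b,q) \;=\; \frac{1}{q}\sum_{h \bmod q} T(ah)\, S(bh),
\]
where $T(z) := \sum_{0 < x \le X} e_q(zx)$ and $S(z) := \sum_{0 < y \le Y,\,(y,q)=1} e_q(zy^2)$. The term $h \equiv 0 \pmod q$ contributes $\lfloor X \rfloor \cdot \#\{0 < y \le Y : (y,q)=1\}/q$, and Möbius inversion yields $\#\{0 < y \le Y : (y,q)=1\} = \varphi(q) Y/q + O(\tau(q))$; this produces the announced main term $\varphi(q) XY/q^2$ and the first error of order $X\tau(q)/q$.

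For $h \not\equiv 0$ we use the standard estimate $|T(ah)| \ll \min(X, q/|ah|_q)$, where $|n|_q$ denotes the distance of $n$ to the nearest multiple of $q$. To bound $S(z)$, write $d := \gcd(z,q)$, $z = dz'$ and $q = dq'$ with $(z',q')=1$, so that $e_q(zy^2) = e_{q'}(z'y^2)$, and decompose $Y = Kq + Y_0$ with $0 \le Y_0 < q$. This yields $S(z) = K G^*(z) + S_0(z)$, where $G^*(z) := \sum_{r \bmod q,\,(r,q)=1} e_q(zr^2)$ is a complete unit-restricted quadratic Gauss sum and $S_0$ is the analogous incomplete tail of length $Y_0$. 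Reducing $G^*(z)$ to a Gauss sum modulo $q'$ (via the pair $(r \bmod q',\, r \bmod d)$) and applying the classical Gauss-sum evaluation gives $|G^*(z)| \ll \tau(q)\sqrt{qd}$, and hence $K|G^*(z)| \ll Y\tau(q)\sqrt{d/q}$. The incomplete piece is treated by discrete Fourier completion on $\ZZ/q'\ZZ$, exhibiting $S_0(z)$ as a weighted average of complete Gauss sums modulo $q'$; each such sum is bounded by the classical $\ll \sqrt{q'}$, and using the weights $\ll \min(Y_0, q'/|l|_{q'})$ together with Möbius to handle the coprimality yields $|S_0(z)| \ll L(q)\sqrt{qd}$.

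Inserting these bounds into $q^{-1}\sum_{h \not\equiv 0}|T(ah)\,S(bh)|$ and grouping $h$ according to the value $d = \gcd(h,q)$, the inner sum $\sum_{h:\,\gcd(h,q)=d}\min(X, q/|ah|_q)$ is $\ll (q/d)L(q)$ (using $(a,q)=1$ to re-parametrise). Summing the resulting factor $d^{-1/2}$ over the divisors $d\mid q$ produces exactly $\sigma_{-1/2}(q)$, and assembling the contributions gives the error $L(q)\sigma_{-1/2}(q)\bigl(\tau(q)Y/\sqrt q + L(q)\sqrt q\bigr)$ announced in the theorem. The main obstacle is ensuring that the factor $\tau(q)$ attaches only to the $Y$-term and not also to the $\sqrt q$-term: this requires completing the incomplete sum $S_0$ on $\ZZ/q'\ZZ$ rather than on $\ZZ/q\ZZ$, so as to keep the Gauss-sum modulus reduced, and pairing the coprimality Möbius carefully with the divisor structure of $d$. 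The subsidiary technical points are the clean handling of the case $Y > q$ via the full-period/tail split, and verifying that the summation over $d \mid q$ produces $\sigma_{-1/2}(q)$ rather than a cruder divisor-like factor.
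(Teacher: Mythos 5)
Your route is genuinely different from the paper's: you detect the congruence directly by orthogonality of additive characters modulo $q$ and estimate the resulting complete $\times$ incomplete exponential sum $q^{-1}\sum_h T(ah)S(bh)$, whereas the paper encodes the count via the saw-tooth function $\psi$, uses Vaaler's trigonometric approximation, removes the smooth weights by partial summation, and only then completes Gauss sums inside a general estimate (their Theorem~\ref{generalbound} and its Corollary). The character-orthogonality framing is cleaner for the $h=0$ main term, and your bookkeeping for $h\neq 0$ by $d=\gcd(h,q)$ is the right way to surface $\sigma_{-1/2}(q)$, correctly matching the theorem's shape. However, there is one step you have not justified, and I don't think the argument in the order you propose actually delivers it.

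The gap is in the claimed bound $|S_0(z)|\ll L(q)\sqrt{qd}$. You extract $d=\gcd(z,q)$ \emph{first}, reducing to modulus $q'=q/d$, and \emph{then} apply M\"obius over $e\mid q$ to remove $(y,q)=1$, and finally complete on $\ZZ/q'\ZZ$. After M\"obius, the $e$-th summand has effective Gauss-sum modulus $q'/(e^2,q')$, and the tail contribution is $\ll \sqrt{q'/(e^2,q')}\,L(q)$. The problem is that $(e^2,q')$ need not grow with $e$, so $\sum_{e\mid q}\sqrt{q'/(e^2,q')}$ is in general only $\ll \tau(q)\sqrt{q'}$, not $\ll \sigma_{-1/2}(q)\sqrt{q'}$. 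Concretely, if a prime power $p^a\|q$ divides $d$ entirely so that $p\nmid q'$, then $(e^2,q')$ has no $p$-part for any $e$, and the local factor of $\sum_{e\mid q}(e^2,q')^{-1/2}$ at $p$ is $a+1$, unbounded relative to $\sigma_{-1/2}(p^a)=O(1)$. So your stated bound $|S_0|\ll L(q)\sqrt{qd}$ picks up an extra $\tau(q)$, which then propagates to give $\tau(q)L(q)^2\sigma_{-1/2}(q)\sqrt{q}$ rather than $L(q)^2\sigma_{-1/2}(q)\sqrt{q}$ in the final error. You flag the issue (``the main obstacle is ensuring $\tau(q)$ attaches only to the $Y$-term'') but completing on $\ZZ/q'\ZZ$ instead of $\ZZ/q\ZZ$ does not by itself remove the $\tau(q)$ coming from the $\sum_{e\mid q}$.

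The fix is to invert the order of operations, as the paper does in the background of its $\psi$-argument: apply M\"obius over $e\mid q$ \emph{before} stripping the gcd with $h$. Substituting $y=es$ immediately drops the modulus in the exponential from $q$ to $q/e$, and only afterwards do you remove the common factor $(he,q/e)$ to land on modulus $q'=\tfrac{q/e}{(he,q/e)}\le q/e$. Then the tail bound is $\ll\sqrt{q'}L(q)\le\sqrt{q/e}\,L(q)$, and the sum over $e\mid q$ contributes $\sum_{e\mid q}e^{-1/2}=\sigma_{-1/2}(q)$ rather than $\tau(q)$; similarly $(he,q/e)^{1/2}\le (h,q)^{1/2}e^{1/2}$ confines the $\tau(q)$ to the $Y$-term, as required. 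With that reordering your approach goes through and recovers the theorem; as written, the claimed estimate $|S_0(z)|\ll L(q)\sqrt{qd}$ is unsupported.
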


As an application of this result we will consider the topic of  ``almost primes''' on 
rational surfaces. Later we will produce a version of Theorem \ref{t:fixedabqX} 
in which averaging over the coefficients $a,b,q$ is successfully
carried out and discuss such results in the context of counting
$\QQ$-rational points of bounded height on  singular del Pezzo
surfaces.

Let $X$ be a del Pezzo surface defined over $\QQ$,
embedded in
projective space $\PP^d$, for some $d\geq 3$. 
We may clearly identify $X(\QQ)$ with $X(\ZZ)$, assuming that $X$ is
given by equations with coefficients in $\ZZ$. 
In view of the pioneering work of Bourgain, Gamburd and Sarnak \cite{BGS} one might ask 
whether $X$ has finite ``saturation number'' $r(X(\ZZ),x_0\cdots
x_d)$. 
This is defined to be
the least number $r$ such that the set of
$
\x=(x_0,\ldots,x_d)\in \ZZ^{d+1}
$
for which $[\x]\in X(\ZZ)$ and 
$x_0\cdots x_d$ is a product of at most $r$ primes, is Zariski dense
in $X$.  The investigation of 
Bourgain, Gamburd and Sarnak \cite{BGS}, together with later
refinements of Nevo and Sarnak \cite{NS}, gives effective saturation
numbers for orbits of congruence subgroups of semi-simple
linear algebraic groups in $\mathrm{GL}_n$ defined over $\QQ$. 
In particular these results do not cover del Pezzo
surfaces.

By combining the theory of universal torsors with sieve methods 
it is possible to  demonstrate that 
$r(X(\ZZ),x_0\cdots x_d)<\infty$ for several del Pezzo surfaces. We will
illustrate this line of thought with a particular 
singular del Pezzo surface of degree $6$ over $\QQ$.
Let  $X_0\subset \PP^6$ be such a surface with singularity type
$\mathbf{A}_2$ and both of its $2$ lines defined over $\QQ$. 
Then $X_0$ is given as an intersection of $9$ quadrics in $\PP^6$ and 
Loughran  \cite{loughran} has established the Manin conjecture 
for this surface, together with a power saving in the error term. The
underlying approach involves descending to the universal torsor,
which in this setting is a certain open subset $\mathcal{T}$ of the
affine hypersurface
\begin{equation}
  \label{eq:UT-dp6}
\eta_2\alpha_1^2+\eta_3\alpha_2+\eta_4\alpha_3=0,
\end{equation}
in $\AA^{7}=\Spec\ZZ[\eta_1,\ldots
\eta_4,\alpha_1,\alpha_2,\alpha_3]$.
One is therefore led to count solutions
to this equation in integers $\eta_1, \ldots,\eta_4, \alpha_1,\alpha_2,\alpha_3$, 
subject to a number of constraints. Loughran achieves this be viewing
the equation as a congruence
$\eta_2\alpha_1^2+\eta_3\alpha_2\equiv 0 \bmod{\eta_4}$, for fixed
$\eta_1,\ldots,\eta_4$, before summing the contribution over the
remaining variables. 
We will modify this argument, appealing instead
to Theorem \ref{t:fixedabqX} and the weighted sieve of Diamond and Halberstam \cite{DH},
in order to establish the following result in \S \ref{s:dp6}.

\begin{thm}\label{t:sat}
We have $r(X_0(\ZZ),x_0\cdots x_6)\leq 45$.
\end{thm}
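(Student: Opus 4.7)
The plan is to follow Loughran's approach \cite{loughran}, descending to the universal torsor $\mathcal{T}$, but to replace his direct evaluation of the congruence by an application of the weighted sieve of Diamond and Halberstam \cite{DH} powered by the equidistribution supplied by Theorem \ref{t:fixedabqX}. Every $\QQ$-point of $X_0$ of anticanonical height at most $B$ lifts to an integer septuple $(\eta_1,\ldots,\eta_4,\alpha_1,\alpha_2,\alpha_3)$ on $\mathcal{T}$ subject to a fixed system of coprimality conditions and box inequalities $|\eta_i|\leq B^{a_i}$, $|\alpha_j|\leq B^{b_j}$. Since the seven homogeneous coordinates $x_0,\ldots,x_6$ of $X_0$ are explicit monomials in the torsor variables, one has
\[
\Omega(x_0\cdots x_6)\leq \sum_{i=1}^4 c_i\,\Omega(\eta_i)+\sum_{j=1}^3 d_j\,\Omega(\alpha_j)
\]
for non-negative integers $c_i,d_j$ depending only on the embedding; it therefore suffices to exhibit a Zariski dense set of torsor points for which the right-hand side does not exceed $45$.

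To produce such a set we parameterise by $(\eta_1,\ldots,\eta_4)$ and sieve the $\alpha$-variables. For $(\eta_1,\ldots,\eta_4)$ satisfying the coprimality conditions on $\mathcal{T}$ (in particular $(\eta_2\eta_3,\eta_4)=1$) the torsor equation \eqref{eq:UT-dp6} uniquely determines $\alpha_3$ from $(\alpha_1,\alpha_2)$ whenever the congruence $\eta_2\alpha_1^2+\eta_3\alpha_2\equiv 0\pmod{\eta_4}$ holds, and this is precisely the shape of Theorem \ref{t:fixedabqX} with $a=\eta_3$, $b=\eta_2$, $q=\eta_4$. Inserting divisibility constraints of the form $d\mid \alpha_1\alpha_2\alpha_3$ reduces, after M\"obius inversion and a rescaling of variables, to further applications of Theorem \ref{t:fixedabqX} with the same shape but modified coefficients and moduli; the resulting power-saving error is uniform in $d$. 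Summing over admissible $(\eta_1,\ldots,\eta_4)$ then furnishes a main term of the expected Manin order $B(\log B)^{c}$ together with a smaller-order error, thereby establishing a level of distribution $B^{\tau}$ for some explicit $\tau>0$.

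With this input the weighted sieve of \cite{DH} is applied to all seven torsor variables to produce a positive lower bound, of polynomial-logarithmic order in $B$, for the number of integer points on $\mathcal{T}$ with $\Omega(\eta_i)\leq\rho_i$ and $\Omega(\alpha_j)\leq\sigma_j$ for suitable thresholds $\rho_i,\sigma_j$. Since this lower bound dominates any contribution from a proper closed subvariety of $X_0$, Zariski density follows automatically, and it remains to tune $(\rho_i,\sigma_j)$ so that $\sum c_i\rho_i+\sum d_j\sigma_j\leq 45$. This is a purely numerical optimisation against the sieve tables of \cite{DH} weighted by the exponents $c_i$ and $d_j$, and it is at this step that the explicit constant $45$ appears.

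The principal obstacle is the interaction between a seven-variable sieve and the two-dimensional congruence input of Theorem \ref{t:fixedabqX}. One must treat carefully the degenerate ranges in which some $\eta_i$ is small (so that the $\sqrt{q}L(q)$ term in Theorem \ref{t:fixedabqX} threatens the main term), verify that the divisor-type factors $\tau(q)$ and $\sigma_{-1/2}(q)$ remain summable after the insertion of the sieve weights, and perform the numerical optimisation of the Diamond--Halberstam weights while keeping track of seven differing exponents $c_i,d_j$. Once these book-keeping issues are resolved the resulting almost-prime lower bound translates directly into $r(X_0(\ZZ),x_0\cdots x_6)\leq 45$.
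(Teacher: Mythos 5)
Your high-level strategy (descend to the torsor, use Theorem~\ref{t:fixedabqX} to supply equidistribution, feed the result into the Diamond--Halberstam weighted sieve) is indeed the paper's strategy, but the crucial idea that makes the argument both tractable and quantitative is absent, and without it the plan as written would not land at $45$.

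The paper does \emph{not} run a seven-variable sieve with thresholds $\rho_i$ for $\Omega(\eta_i)$ and $\sigma_j$ for $\Omega(\alpha_j)$. It instead \emph{specialises} $\eta_1=\eta_2=\eta_3=1$ and restricts $\eta_4$ to primes from the outset, noting that this still yields a Zariski dense set (whereas setting any further variable equal to $1$ destroys density -- a point one has to check, and which you gloss over by asserting density "automatically"). With this specialisation the torsor equation becomes $\alpha_1^2-\alpha_2+q\alpha_3=0$ with $q=\eta_4$ prime, and the sieve is then applied only to the single product $\alpha_1\alpha_2\alpha_3$, giving a clean sieve of dimension $\kappa=3$: the density function satisfies $\rho(p)=3-2/p$ for $p\neq q$. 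Theorem~\ref{t:fixedabqX}, applied to the congruence $\alpha_1^2\equiv\alpha_2\bmod{q}$ (after peeling off GCDs coming from the divisibility conditions $e_i\mid\alpha_i$), gives level of distribution $\tau$ for any $\tau<1/2$, and \eqref{eq:123} gives $\mu>4$. The tabulated sieving limit $\beta_3=6.640859$ from \cite{DH} then yields $\alpha_1\alpha_2\alpha_3=P_{12}$, and since $x_0\cdots x_6=\eta_4^9\alpha_1^3\alpha_2^3\alpha_3^3$ on the specialised locus, one gets $\Omega(x_0\cdots x_6)\leq 9+3\cdot 12=45$.

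Your proposal, by contrast, attaches an almost-prime condition to each of the seven variables separately. This is not merely harder bookkeeping: the effective sieve dimension grows with the number of sieved variables, the sieving limits $\beta_\kappa$ deteriorate, and the numerical optimisation you envisage over seven thresholds weighted by the exponents $(13,8,9,9,3,3,3)$ has no reason to return anything close to $45$. The discovery that one can (and must) \emph{throw away} all but one $\eta$-variable -- trading freedom for a dimension-$3$ sieve while still retaining Zariski density -- is precisely the content of the proof, and it is this that produces the explicit constant. I would also flag that your remark that the M\"obius inversion "reduces to further applications of Theorem~\ref{t:fixedabqX}" understates the work: the paper must extract GCD factors $k=(e_1,e_2,e_3)$, $k_{i,3}$, and $\ell$ before the congruence is of the required coprime form, and these contribute nontrivially to the multiplicative structure of $\rho(d)$.

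Finally, note that the error term $\sqrt{q}L(q)$ in Theorem~\ref{t:fixedabqX} is controlled in the paper not by "degenerate ranges where $\eta_i$ is small" (there are none, since three are fixed at $1$) but simply by the range restriction $\tfrac12 B^{1/3}<q\leq B^{1/3}$, which guarantees $\sqrt{\widetilde q}\ll d^{1/2}B^{1/6}$ and hence level of distribution essentially $X^{1/2}$.
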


We now turn to the question of averaging the counting function 
$M_{1,2}(X,Y;a,b,q)$  over  suitably constrained values of 
$a,b$ and $q$.   In this endeavour we  
are influenced by the Manin conjecture \cite{f-m-t} 
for singular del Pezzo surfaces $X$ defined over $\QQ$.   
A particularly fruitful approach to this conjecture has two stages:
\begin{itemize}
\item[---] 
one constructs an explicit bijection between rational points of bounded height on $X$
and integral points in a region on a universal torsor $\mathcal{T}_{X}$; and 
\item[---] 
one estimates the number of integral points in this region on the torsor by its volume and shows
that the volume  has the predicted asymptotic growth rate.
\end{itemize}
A geometrically driven approach to 
the first part has been developed  by Derenthal and  Tschinkel \cite[\S 4]{D-T}.
The second part mainly relies on analytic number theory and has been put on a general footing 
by Derenthal \cite{D},  whenever the torsor is a hypersurface.
In this case the torsor equation typically  takes the form 
\begin{equation}\label{eq:genUT}
\alpha_{0}^{a_{0}}
\alpha_{1}^{a_{1}}\cdots 
\alpha_{i}^{a_{i}}+
\beta_{0}^{b_{0}}
\beta_{1}^{b_{1}}\cdots 
\beta_{j}^{b_{j}}+
\gamma_{0}
\gamma_{1}^{c_{1}}\cdots 
\gamma_{k}^{c_{k}}=0,
\end{equation}
with $(a_{0},\ldots, a_{i})\in \NN^{i+1}$, 
$(b_{0},\ldots, b_{j})\in \NN^{j+1}$ and 
$(c_{1},\ldots,c_{k})\in \NN^{k}$.
Work of Hassett \cite[Theorem 5.7]{hassett} shows that 
there is a natural realisation of a  universal torsor as an open
subset via
$\mathcal{T}_{X}\hookrightarrow
\Spec(\Cox(\widetilde{X}))$, where the coordinates of  
$\mathcal{T}_{X}$  correspond to generators of the Cox ring 
of the minimal desingularisation $\widetilde{X}$ of $X$.
Torsor equations such as \eqref{eq:genUT}  are usually 
handled by viewing them as a congruence modulo $q=\gamma_{1}^{c_{1}}\cdots 
\gamma_{k}^{c_{k}}$. Examples of this are provided by Loughran's
treatment of \eqref{eq:UT-dp6}, or by
our work \cite{baier-browning} on 
$M_{2,3}(X,Y;a,b,q)$,  which is pivotal in the resolution of the Manin
conjecture for a singular del Pezzo surface of degree 2.
Experience suggests that there are several examples of
singular del Pezzo surfaces whose torsor equations produce congruences
of the shape
$$
ru^lx + sv^my^2 = 0 \bmod{tw},
$$
for fixed $l,m \in \NN$. A case in point is the cubic surface
with $\mathbf{D}_5$ singularity type which is studied jointly 
by the first author and Derenthal
\cite{browningderenthal}. Here the relevant congruence that emerges is
precisely of this form with 
$l=2$ and $m=1$. Using a result of similar strength to Theorem
\ref{t:fixedabqX} the Manin conjecture is established for this surface
but only with a modest logarithmic saving in the error term.

 Returning to  the behaviour   of
$M_{1,2}(X,Y;a,b,q)$ on average, 
a key feature of the underlying quadratic Gauss sums that  
arise in the proof of 
Theorem \ref{t:fixedabqX}  is that they satisfy explicit formulae. 
This will allow us to study quite general expressions of the form
\begin{equation} \label{Sdef}
\mathcal{S}:=\sum\limits_{(a,b,q)\in S}  c_{a,b,q}
\sum\limits_{\substack{y\in J\\ (y,q)=1}} \
\sum\limits_{\substack{x\in I(a,b,q,y) \\ ax+by^2 \equiv 0 \bmod{q}}} 
1,
\end{equation}
for $c_{a,b,q}\in \CC$.
Here $S\subset \mathbb{Z}^2\times \mathbb{N}$ is a finite set of
triples $(a,b,q)$ such that $(ab,q)=1$,  
$J=\left(y_0,y_0+Y\right]$
is a fixed interval of length $Y\ge 1$, and 
\begin{equation}\label{eq:I}
I(a,b,q,y)=\left(f^-(a,b,q,y),f^+(a,b,q,y)\right]
\end{equation}
is an interval depending on $a,b,q,y$.   
Theorem \ref{t:fixedabqX}  will be an
easy consequence of a general estimate for $\mathcal{S}$, 
which is presented in \S\ref{s:prelim-study}. 
There are two main ingredients at play here: Vaaler's
trigonometric formula for the saw-tooth function  
$\psi(x):=\{x\}-1/2$, where $\{x\}=x-[x]$ denotes the fractional part of $x$, and
the explicit formulae for the quadratic Gauss sum. These will be
recalled in \S \ref{s:prelim}.

When further restrictions are placed on $S$ and $c_{a,b,q}$ one can go further. 
Motivated by our discussion above we set
\begin{equation}\label{eq:S}
S=\left\{(r u^l,s v^m,t w)\ :\ U<u\le 2U, \ V< v\le 2V, \ W<w\le 2W, \ (rs uv,tw)=1\right\}, 
\end{equation}
where $U,V,W\ge 1/2$
and 
$l,m,r,s,t$ are fixed non-zero integers for which $l,m,t\geq 1$
and $(rs,t)=1$.   We shall think of $r,s,t$ as being parameters,
whose dependence we want to keep track of, but $l$ and $m$ are fixed
once and for all.
We further assume that $c_{a,b,q}$ factorises in the form
\begin{equation} \label{coeffcond}
c_{a,b,q}=c_{r u^l,s v^m, t w}=d_{u,v}e_w, \quad \mbox{with $|d_{u,v}|,|e_w|\le 1$.}
\end{equation}
We also entertain the  possibility that  there is a further factorisation
\begin{equation} \label{Duvfact}
d_{u,v}=d_u'\tilde{d}_v, \quad \mbox{with  $|d_u'|, |\tilde{d}_v|\le 1$.} 
\end{equation}
Moreover, we set
$$
\tilde{f}^{\pm}(u,v,w,y):=f^{\pm}(r u^l,s v^m,t w,y).
$$
We make the assumption that 
$\tilde{f}^{\pm}(u,v,w,y)$ are continuous functions 
and have piecewise continuous partial derivatives with respect to the
variables $u,v,w$. We further assume that  
$\tilde{f}^{+}\ge \tilde{f}^{-}$ in the whole domain $(U,2U]\times(V,2V]\times (W,2W]\times J$, with 
\begin{equation} \label{fcondnew}
\left| \frac{\partial^{i+j+k}\tilde{f}^{\pm}}{\partial u^i \partial v^j \partial y^k} (u,v,w,y) \right| \leq \rho^i \sigma^j 
\tau^k F 
\end{equation}
there, for $i,j,k\in \{0,1\}$ such that $i+j+k\not=0$,  
where $\rho,\sigma,\tau,F$ are suitable non-negative numbers. 
For any $H>0$ we set
\begin{equation} \label{Deltadefnew}
\Delta_H=\left(1+\frac{HF\rho U}{tW}\right)\left(1+\frac{HF\sigma V}{tW}\right)\left(1+\frac{HF\tau Y}{tW}\right)
\end{equation}
and
$$
\mathcal{Z}:=
\begin{cases} (tW+U)^{1/2}(tW+V)^{1/2}(UV)^{1/2}W, 
& \mbox{if  \eqref{Duvfact} holds and 
$UV\ge tW$,}\\ 
 (tW)^{1/2}UVW, & \mbox{in general.}
  \end{cases}
$$
We may now record  the outcome of our analysis of the sum $\mathcal{S}$ in \eqref{Sdef} in this setting.

\begin{thm} \label{mainresult} 
Let $\ve>0$ and assume that 
\begin{equation} \label{Hcond}
H\ge \frac{tW}{F}.
\end{equation}
Then under the above hypotheses we have 
$$
\mathcal{S} = \mathop{\sum\limits_{U<u\le 2U}\sum\limits_{V<v\le 2V}\sum\limits_{W<w\le 2W}}_{(rs uv,tw)=1}  
\frac{d_{u,v}e_w}{tw} \sum\limits_{\substack{y_0<y\le y_0+Y\\
    (y,tw)=1}} \tilde{X}(u,v,w,y)+
O\left(\frac{UVWY}{H}\right)+O(\mathcal{T}), 
$$
where 
$\tilde{X}(u,v,w,y):=
\tilde{f}^{+}(u,v,w,y)-\tilde{f}^{-}(u,v,w,y)$ and 
$$
\mathcal{T}:=\Delta_H \left(\frac{Y}{(tW)^{1/2}}\left(U^{1-\{l/2\}}V^{1-\{m/2\}}W+
UVW^{1/2}\right)+\mathcal{Z}\right)(HtUVW)^{\varepsilon}.
$$
\end{thm}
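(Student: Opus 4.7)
The plan is to start from the exact formula
$$
\sum_{\substack{x \in I(a,b,q,y) \\ ax+by^2 \equiv 0 \bmod q}} 1
= \frac{\tilde X(u,v,w,y)}{tw} + \psi\!\left(\frac{\tilde f^-(u,v,w,y)-\xi}{tw}\right) - \psi\!\left(\frac{\tilde f^+(u,v,w,y)-\xi}{tw}\right),
$$
valid for $(a,b,q)=(ru^l,sv^m,tw)\in S$ and $(y,tw)=1$, where $\xi=\xi(u,v,w,y)$ is any integer with $\xi\equiv -sv^m\overline{ru^l}\,y^2\pmod{tw}$; the main term of the theorem arises at once on multiplying by $c_{a,b,q}=d_{u,v}e_w$ and summing, so it remains to control the contribution of the two $\psi$-terms.

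I would then invoke Vaaler's approximation (recalled in \S\ref{s:prelim}) to replace each $\psi$ by a trigonometric polynomial $\sum_{0<|h|\le H} \hat c_h\, e(hz)$ with $|\hat c_h|\ll 1/|h|$, plus an error of finite Fourier support whose trivial total is absorbed into the term $O(UVWY/H)$. For each $0<|h|\le H$ the $h$-th contribution factorises, after substituting $z=(\tilde f^\pm-\xi)/(tw)$, as the smooth phase $e(h\tilde f^\pm(u,v,w,y)/(tw))$ times the arithmetic quadratic exponential $e(hsv^m\overline{ru^l}\,y^2/(tw))$. The derivative bound \eqref{fcondnew}, together with the assumption \eqref{Hcond}, guarantees that the smooth phase is essentially constant on intervals of length $\gg tW/(hF\rho)$, $\gg tW/(hF\sigma)$ and $\gg tW/(hF\tau)$ in the variables $u,v,y$ respectively. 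A threefold partial summation in these variables therefore separates the smooth and arithmetic parts at the cost of the multiplicative factor $\Delta_H$ of \eqref{Deltadefnew}, reducing matters to bounding, for each $0<|h|\le H$ and each sub-interval $J'\subseteq J$, the averaged incomplete Gauss sum
$$
\mathop{\sum\sum\sum}_{\substack{(u,v,w)\in(U,2U]\times(V,2V]\times(W,2W] \\ (rsuv,tw)=1}} d_{u,v}e_w \sum_{\substack{y\in J' \\ (y,tw)=1}} e\!\left(\frac{hsv^m\overline{ru^l}}{tw}\,y^2\right).
$$

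Completing the inner $y$-sum by a further Vaaler step and invoking the explicit quadratic Gauss sum formula, the resulting complete sum evaluates as $(tw)^{1/2}$ times a unit-modulus constant times a Jacobi symbol of the shape $\bigl(h s v^m \overline{r u^l}/(tw)\bigr)$. The crucial arithmetic observation is that $u^l$ (resp.\ $v^m$) contributes trivially to the symbol when $l$ (resp.\ $m$) is even and only as a quadratic character in $u$ (resp.\ $v$) when it is odd: this accounts for the exponents $U^{1-\{l/2\}}$ and $V^{1-\{m/2\}}$ in $\mathcal T$. Trivially summing in $u,v,w,h$ gives the first half of the $\mathcal T$-bound, while the term $UVW^{1/2}$ stems from the boundary terms in the Fourier completion and from frequencies with $(h,tw)>1$. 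The improved contribution $\mathcal Z=(tW+U)^{1/2}(tW+V)^{1/2}(UV)^{1/2}W$ valid under \eqref{Duvfact} and $UV\ge tW$ is then extracted by applying Cauchy--Schwarz separately in the $u$- and $v$-variables: opening the squares and exploiting orthogonality of the quadratic exponentials modulo $tw$ reduces the off-diagonal contributions to a second-moment estimate in the spirit of the large sieve. The principal obstacle is the careful orchestration of the three partial summations with the completion in $y$, so that the factorisation \eqref{coeffcond} (and, when available, \eqref{Duvfact}) of $c_{a,b,q}$ is preserved throughout, and the loss $\Delta_H$ is not compounded beyond the shape recorded in \eqref{Deltadefnew}.
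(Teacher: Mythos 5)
Your skeleton is aligned with the paper's: Vaaler to linearise $\psi$ (giving $O(UVWY/H)$), partial summation in $u,v,y$ to peel off the smooth phase at a cost of $\Delta_H$, Fourier completion of the $y$-sum, and Gauss-sum evaluation producing Jacobi symbols $\bigl(u^lv^m/q'\bigr)$ whose parity-of-exponent dependence explains the $U^{1-\{l/2\}}V^{1-\{m/2\}}$ shape. But there is a real gap in how you propose to harvest the $\mathcal{T}$-bound, and it concerns the key analytic input.

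After you complete the $y$-sum you obtain a dual frequency $k$ running modulo $q'$, and the two pieces $k=0$ and $k\neq 0$ require genuinely different tools. You do not make this split, and as a consequence you misattribute both halves of the term $\frac{Y}{(tW)^{1/2}}\bigl(U^{1-\{l/2\}}V^{1-\{m/2\}}W+UVW^{1/2}\bigr)$. That expression is the estimate for the $k=0$ contribution alone, and it is obtained in the paper not by ``trivially summing'' nor from ``boundary terms in the Fourier completion'', but by applying Heath-Brown's large sieve for real characters (Lemma~\ref{lsreal}) to the bilinear real-character sum $\sum_{w}\sum_{z=uv}\alpha_w\beta_z\bigl(\tfrac{z}{w}\bigr)$, which produces exactly the two shapes $W(UV)^{1/2}$ and $W^{1/2}UV$. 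Trivial summation would give $UVW$ with no $W^{1/2}$ saving, and recognising the Jacobi-symbol structure is not by itself enough to extract the cancellation; the real-character large sieve is indispensable here and absent from your plan. For $k\neq 0$, your idea (Cauchy--Schwarz in $u$ and $v$, followed by a mean-value/large-sieve input exploiting orthogonality of the quadratic phases) is in the right spirit: the paper expands the additive character $\e(-\overline{4sv^mh'}ru^lk^2/q'')$ in multiplicative characters, applies Cauchy--Schwarz, and then uses the large sieve for a fixed modulus, yielding the $\mathcal{Z}$ term under \eqref{Duvfact}. Finally, the paper also treats the two further congruence classes $2\|q'$ and $4\mid q'$ of Lemma~\ref{gaussevlemma} in parallel; your plan handles only the odd-modulus case. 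In short, the overall architecture is right, but the decisive step — the $k=0$ piece and the real-character large sieve — is missing.
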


Theorem \ref{mainresult} will be established in \S \ref{proof2}.  The  character sums that arise from the 
 explicit formulae for Gauss sums used in  Theorem \ref{t:fixedabqX} are handled using a mixture of the ordinary large sieve and the large sieve for real characters developed by Heath-Brown
 \cite{heathbrown}.
A review of favourable conditions under which the
main term dominates the error term in Theorem \ref{mainresult} is saved for \S \ref{s:final}.

In line with our discussion of saturation numbers and the Manin conjecture, 
Theorems \ref{t:fixedabqX}  and   
\ref{mainresult}  have significant potential impact in the study of
rational points on del Pezzo surfaces. 
Indeed,
it is likely that  the former result can be used to establish versions
of  Theorem \ref{t:sat}  for   
other
singular del Pezzo surfaces whose universal torsors are open subsets of 
affine  hypersurfaces \eqref{eq:genUT},  which after fixing some of the variables take the basic shape $ax+by^{2}=cz$.   Likewise, the utility of Theorem  
\ref{mainresult}  will be illustrated in
forthcoming work of the first author, where  it
is used to 
establish the Manin conjecture for a further singular cubic surface.

\begin{ack}
While working on this paper the  authors were 
supported by EPSRC grant number \texttt{EP/E053262/1}.  
The authors are grateful to Jianya Liu for drawing their attention to
the question of saturation numbers for del Pezzo surfaces.
\end{ack}

\section{Almost primes on a sextic del Pezzo surface}\label{s:dp6}

We begin by summarising the passage to the universal torsor made
use of by Loughran \cite{loughran} in his resolution of the Manin
conjecture for the split del Pezzo surface $X_0\subset \PP^6$
of degree $6$ with singularity type $\mathbf{A}_2$.   
Working on the Zariski open subset $U\subset X_0$
formed by deleting the lines, it follows from
\cite[Lemma~3.2]{loughran} 
that above each point $[\x]\in U(\ZZ)$, with
$\x=(x_0,\ldots,x_6)$ a primitive integer vector, there is a unique
integral point $(\bet,\bal)$ on the universal torsor $\mathcal{T}$
in \eqref{eq:UT-dp6}, satisfying
$$
\begin{cases}
(\alpha_1,\eta_1\eta_3\eta_4)=
(\alpha_2,\eta_1\eta_2\eta_4)=
(\alpha_3,\eta_1\eta_2\eta_3)=1,\\
(\eta_2,\eta_3)=
(\eta_2,\eta_4)=
(\eta_3,\eta_4)=1,\\
\eta_1,\eta_2,\eta_3,\eta_4>0, \quad \alpha_1\alpha_2\alpha_3\neq 0.
\end{cases}
$$
There is a surjective morphism $\pi: \mathcal{T}\rightarrow X_0$, 
defined over $\QQ$, which is
given by 
\begin{align*}
(\bet,\bal)\mapsto 
&
(\al_2\al_3,\eta^{(1,1,1,0)}\al_1\al_2, 
\eta^{(1,1,0,1)}\al_1\al_3, \\
&\quad \eta^{(2,1,2,1)}\al_2, \eta^{(2,1,1,2)}\al_3,
\eta^{(4,2,3,3)}, \eta^{(3,2,2,2)}\al_1),
\end{align*}
where $\eta^{(a,b,c,d)}=\eta_1^a\eta_2^b\eta_3^c\eta_4^d$.
In particular one notes that 
$$
x_0\cdots x_6 = 
\eta_1^{13}\eta_2^{8}\eta_3^{9}
\eta_4^9
\al_1^3\al_2^3\al_3^3,
$$
under $\pi$.

In order to establish Theorem \ref{t:sat} it will suffice to produce
a Zariski dense set of almost prime solutions of the torsor
equation. If one restricts to points $x=[\x]\in U(\ZZ)$ with anticanonical
height $H(x)\leq B$ then one gets corresponding 
size restrictions on the integral
points $(\bet,\bal)$ via $\pi.$
Since we are merely concerned with a lower bound for the
associated counting function, we may freely specialise
convenient constraints on the torsor variables at the outset. With
this in mind we will only consider solutions in which
$\eta_1=\eta_2=\eta_3=1$ and $\eta_4$ is prime.
It would be tempting to set further variables equal to unity in the
torsor equation,
but 
one easily demonstrates that such points do not constitute a Zariski
dense open subset of $X_0$.

For any $t\geq 1$, let $M_t(B)$ denote the number of
$(\eta_4,\alpha_1,\alpha_2,\alpha_3)\in \ZZ^4$ such that 
$$
\begin{cases}
(\al_1\al_2,\eta_4)=1, \quad \eta_4>0, \quad
\alpha_1\alpha_2\alpha_3\neq 0,\\
\max\{
|\al_2\al_3|,|\al_1\al_2|, |\eta_4\al_1\al_3|, |\eta_4 \al_2|, |\eta_4^{2}\al_3|,
|\eta_4^{3}|, |\eta_4^2\al_1|\}\leq B,
\end{cases}
$$
and 
$
\alpha_1^2-\alpha_2+\eta_4\alpha_3=0,
$
with $\eta_4$ being prime and 
$\alpha_1\alpha_2\alpha_3=P_t$, where
$n=P_t$ means that $n$ has at most $t$ prime factors. 
If $N_r(B)$  denotes the number of $x=[\x]\in U(\ZZ)$ for which
$H(x)\leq B$ and $x_0\cdots x_6=P_r$, then
it is clear that
$$
N_r(B)\geq \frac{1}{2} M_{r/3-3}(B),
$$
if $r\geq 12$. In view of \cite[Theorem~1.1]{loughran} one has $N_r(B)\ll
B\log^3 B$ for any $r\geq 1$. Hence, in order to establish Theorem
\ref{t:sat}, it will suffice to establish the existence of absolute
constants $t\geq 1$ and $k\in \ZZ$ for which 
\begin{equation}\label{eq:needed}
M_t(B)\gg B \log^{k}B.
\end{equation}
In fact we shall demonstrate that the choices $t=12$ and $k=-5$ are
permissible in this estimate, which will therefore  terminate the 
proof of Theorem~\ref{t:sat}. 

It is clear that $0<\eta_4\leq B^{1/3}$ in any point counted by 
$M_t(B)$. In estimating $M_t(B)$ from below it will be convenient to
only consider primes $\eta_4$ in the range $\frac{1}{2}B^{1/3}<\eta_4\leq
B^{1/3}$. Likewise we will insist that 
\begin{equation}
  \label{eq:heights-dp6}
0<\al_1\leq \frac{1}{2}B^{1/3}, \quad 
0<\al_2\leq \frac{1}{2}B^{2/3}.
\end{equation}
Together with the equation 
$\alpha_1^2-\alpha_2+\eta_4\alpha_3=0$,
these restrictions on $\eta_4,\alpha_1,\al_2$ 
ensure that the size restrictions on $\alpha_3$ hold automatically,
apart from the condition that $\alpha_3\neq 0$. 
For any prime $q$ let 
$L_{t}(B;q)$ denote the number $(\al_1,\al_2,\al_3)\in \ZZ^3$ for which 
\eqref{eq:heights-dp6} holds and 
$(\al_1\al_2,q)=1$, 
with
\begin{equation}
  \label{eq:q-torsor}
\alpha_1^2-\alpha_2+q\alpha_3=0
\end{equation}
and $\alpha_1\alpha_2\alpha_3=P_t$.
In particular we have
\begin{equation}
  \label{eq:123}
  |\al_1\al_2\al_3|\leq B^{4/3},
\end{equation}
for any point counted by $L_{t}(B;q)$. 
We now have the inequality
\begin{equation}
  \label{eq:fix-eta4}
  M_t(B)\geq \sum_{\substack{\frac{1}{2}B^{1/3}<q\leq
B^{1/3}\\ q \mbox{\scriptsize{ ~prime}} }} L_{t}(B;q) +O(B^{2/3}),
\end{equation}
since points with $\alpha_3=0$ trivially contribute $O(B^{2/3})$.
The stage is now set for an application of sieve methods to
estimate $L_t(B;q)$ from below.

Our work will make use of the weighted sieve of dimension $\kappa>1$,
as developed by Diamond and Halberstam \cite[Chapter 11]{DH}.
We  recall here the basic set-up. 
Given a finite sequence $\cA=\{a_n\}_{n\geq 1}$ of non-negative
real numbers, 
the weighted sieve can be used to
determine a precise lower bound for the sum
$$
S_t(\cA) = \sum_{\substack{n=P_t}} a_n.
$$
We proceed to record the basic sieve assumptions.

{\em Condition ($W_0$):}
There exists an approximation $X$ to $\sum_{n\geq 1} a_n$,
such that for any square-free $d \in \NN$ we have 
$$
\sum_{\substack{n\geq 1\\ n\equiv 0 \bmod{d}}}a_n
= \frac{\rho(d)}{d}X+R_d(\cA),
$$
where $\rho$ is a multiplicative
function satisfying $\rho(1)=1$ and 
$$
0\leq \rho(p)<p,
$$
for any prime $p$.

{\em Condition ($W_1$):}
$\cA$ has dimension $\kappa>1$, by which we mean that there exists
 $c_1>0$ such that 
$$
\prod_{w\leq p\leq z} \left(1-\frac{\rho(p)}{p}\right)^{-1}\leq 
\left(\frac{\log z}{\log w}\right)^\kappa \left(1+\frac{c_1}{\log w}\right), 
$$
for any $2\leq w\leq z$. 

{\em Condition ($W_2$):}
$\cA$ has level of distribution $\tau\in (0,1)$, by which we mean that there
exists $c_2\geq 1$ and $c_3\geq 2$ such that 
$$
\sum_{d\leq X^\tau \log^{-c_2}X} \mu^2(d)4^{\omega(d)}|R_d(\cA)| \leq
c_3 \frac{X}{\log^{\kappa+1}X}, 
$$
where $\omega(d)$ denotes the number of prime factors of $d$. 

Assume conditions ($W_0$), ($W_1$) and ($W_2$). Let $\mu$ be a
constant such that 
\begin{equation}
  \label{eq:W3}
\max_{a_n \in \cA} n \leq X^{\tau \mu}.  
\end{equation}
Then it follows from
\cite[Section 11.4]{DH} that there exists a
real constant $\beta_\kappa>1$ such that 
\begin{equation}
  \label{eq:sieve}
S_t(\cA) \gg X \prod_{p<X^{\tau/(2\beta_\kappa-1)}} \left(1-\frac{\rho(p)}{p}\right),
\end{equation}
provided that
$
t>
\mu-1+(\mu-\kappa)(1-1/\beta_\kappa)+(\kappa+1)\log \beta_\kappa.
$
The values of the sieving parameters 
$\beta_\kappa$ are tabulated in \cite[Chapter 17]{DH}.

\bigskip

For a fixed prime $q$ 
satisfying
\begin{equation}\label{eq:q-size}
\frac{1}{2}B^{1/3}<q\leq  B^{1/3},
\end{equation}
we take $\cA$ to be set of $a_n=a_{n}(B;q)$, where each $a_n$ is 
the cardinality of $(\al_1,\al_2,\al_3)\in \ZZ^3$ for which 
\eqref{eq:heights-dp6} and \eqref{eq:q-torsor} hold, with
$(\al_1\al_2,q)=1$ and 
$\alpha_1\alpha_2\alpha_3=\pm n$. 
In particular it is clear that 
$S_t(\cA)=L_t(B;q)$ and 
we may take
\begin{equation}
  \label{eq:XX}
X=\frac{\phi(q)B}{4q^2}=\frac{\phi^*(q)B}{4q},
\end{equation}
where $\phi^*(n)=\phi(n)/n$, 
since $\sum_{n\geq 1}a_n$ is asymptotically equal to $\frac{\phi(q)}{4q^2}B$
as $B\rightarrow \infty$. 
For any square-free $d\in \NN$, it follows from the inclusion--exclusion principle 
that
\begin{equation}\label{eq:in-ex}
\sum_{\substack{n\geq 1\\ n\equiv 0 \bmod{d}}} a_n
=\mu(d)\sum_{\substack{\mathbf{e}\in \NN^3\\
p\mid e_1e_2e_3 \Leftrightarrow p\mid d
}}
\mu(e_1)\mu(e_2)\mu(e_3)\#S_{\mathbf{e}}(\cA),
\end{equation}
where
$S_{\ma{e}}(\cA)$ denotes the set of 
$(\al_1,\al_2,\al_3)\in \ZZ^3$ for which 
\eqref{eq:heights-dp6} holds and 
$(\al_1\al_2,q)=1$, with \eqref{eq:q-torsor} and $e_i \mid \al_i$ for
$1\leq i \leq 3$. In particular we will only be interested in
$\ma{e}\in \NN^3$ for which $(e_1e_2,q)=1$. 
Making an obvious change of variables we deduce that
$S_{\ma{e}}(\cA)$ is the set of 
$(\beta_1,\beta_2)\in \ZZ^2$ for which 
$$
0<\beta_1\leq \frac{B^{1/3}}{2e_1}, \quad 
0<\beta_2\leq \frac{B^{2/3}}{2e_2},
$$
with $(\beta_1\beta_2,q)=1$ and  
$$
e_1^2\beta_1^2-e_2\beta_2\equiv 0\bmod{e_3q}.
$$
We need to remove common factors of $e_i\beta_i$ with $e_3$. 
Let $k=(e_1,e_2,e_3)$ and write $e_i=ke_i'$. In particular
$(k,e_1'e_2'e_3')=1$ since $e_1,e_2,e_3$ are square-free.
The above congruence then becomes
$$
ke_1'^2\beta_1^2-e_2'\beta_2\equiv 0\bmod{e_3'q}.
$$
We now  put  $k_{i,3}=(e_i',e_3')$ for $i=1,2$. Then $k_{2,3}$ divides
$\beta_1$ and $k_{1,3}$ divides $\beta_2$. Making the obvious changes
of variables we 
see that
$
S_{\ma{e}}(\cA)$
is the set of $(\beta_1',\beta_2')\in \ZZ^2$ for which 
$$
0<\beta'_1\leq \frac{B^{1/3}}{2e_1k_{2,3}}, \quad 
0<\beta'_2\leq \frac{B^{2/3}}{2e_2k_{1,3}},
$$
with $(\beta'_1\beta_2',q)=1$ and  
$$
kk_{1,3}k_{2,3}f_1^2\beta_1'^2-f_2\beta_2'\equiv 0\bmod{f_3q},
$$
where
$$
f_1=\frac{e_1}{kk_{1,3}}, \quad
f_2=\frac{e_2}{kk_{2,3}}, \quad
f_3=\frac{e_3}{kk_{1,3}k_{2,3}}.
$$
Finally we need to remove common factors of $\beta_1', \beta_2', f_3$.
Let $\ell=(f_3,\beta_1',\beta_2')$. Making a suitable change of
variables, we now have 
\begin{equation}\label{eq:sund}
\#S_{\ma{e}}(\cA)=\sum_{\ell\mid f_3}\#S_{\ma{e},\ell}(\cA),
\end{equation}
where $S_{\ma{e},\ell}(\cA)$
is the set of $(x,y)\in \ZZ^2$ for which 
$$
0<x \leq \frac{B^{2/3}}{2e_2k_{1,3}\ell}=X_0, \quad 
0<y \leq \frac{B^{1/3}}{2e_1k_{2,3}\ell}=Y_0, 
$$
say, with $(x y,\widetilde{q})=1$ and  
$$
\widetilde{a}x+\widetilde{b} y^2\equiv 0\bmod{\widetilde{q}},
$$
where
$$
\widetilde{a}=-f_2, \quad
\widetilde{b}=kk_{1,3}k_{2,3}f_1^2\ell,
\quad
\widetilde{q}=\frac{f_3q}{\ell}=\frac{e_3q}{kk_{1,3}k_{2,3}\ell}.
$$
In particular we have $\widetilde{q}\geq 1$ and 
$(\widetilde{a}\widetilde{b},\widetilde{q})=1$ in this counting problem.

We appeal to Theorem \ref{t:fixedabqX} to estimate
$\#S_{\ma{e},\ell}(\cA)$ for given $\ma{e}\in \ZZ^3$. 
The main term is 
\begin{align*}
\frac{\phi(\widetilde{q})X_0Y_0}{\widetilde{q}^2}
&=
\frac{B}{4e_1e_2k_{1,3}k_{2,3}\ell^2}\cdot \phid\left(
\frac{e_3q}{kk_{1,3}k_{2,3}\ell}\right)
\cdot
\frac{kk_{1,3}k_{2,3}\ell}{e_3q}\\
&= X\cdot
\frac{1}{e_1e_2e_3\ell}\cdot \phid\left(
\frac{e_3q}{kk_{1,3}k_{2,3}\ell}\right)
\cdot
\frac{k}{\phid(q)},
\end{align*}
where $X$  is given by \eqref{eq:XX}.
Likewise the error terms are seen to contribute
\begin{align*}
\ll 
\widetilde{q}^{\ve}\left(\frac{X_0}{\widetilde{q}}+ \frac{Y_0}{\sqrt{\widetilde{q}}}+\sqrt{\widetilde{q}}\right)
&\ll (dq)^{\ve}\left(\frac{B^{2/3}}{q}+\frac{B^{1/3}}{\sqrt{q}}+\sqrt{dq}\right)\\
&\ll 
d^\ve B^{1/3+\ve}+d^{1/2+\ve}B^{1/6+\ve},
\end{align*}
for any $\ve>0$, 
since $q$  is assumed to be in the range \eqref{eq:q-size}.

Employing \eqref{eq:in-ex} and \eqref{eq:sund} 
we now obtain
\begin{align*}
\sum_{\substack{n\geq 1\\ n\equiv 0 \bmod{d}}} a_n
&=\frac{\rho(d)}{d}X +R_d(\cA),
\end{align*}
with $R_d(\cA)=O(d^\ve B^{1/3+\ve} + d^{1/2+\ve}B^{1/6+\ve})$ and
\begin{align*}
\rho(d)
&=\mu(d)d\sum_{\substack{\mathbf{e}\in \NN^3\\
p\mid e_1e_2e_3 \Leftrightarrow p\mid d\\
(e_1e_2,q)=1
}}
\frac{\mu(e_1)\mu(e_2)\mu(e_3)}{e_1e_2e_3}
\sum_{\ell\mid f_3}\frac{1}{\ell}\cdot \phid\left(
\frac{f_3q}{\ell}\right)
\cdot
\frac{k}{\phid(q)}\\
&=\mu(d)d\sum_{\substack{\mathbf{e}\in \NN^3\\
p\mid e_1e_2e_3 \Leftrightarrow p\mid d\\
(e_1e_2,q)=1
}}
\frac{\mu(e_1)\mu(e_2)\mu(e_3)(e_1,e_2,e_3)}{e_1e_2e_3}
\sum_{\ell\mid f_3}\frac{1}{\ell}\cdot \frac{\phid(f_3/\ell)}{\phid((f_3/\ell,q))},
\end{align*}
where we recall that 
$$
k=(e_1,e_2,e_3),  \quad
k_{i,3}=\left(\frac{e_i}{k},\frac{e_3}{k}\right), \quad
f_3=\frac{e_3}{kk_{1,3}k_{2,3}},
$$
for $i=1,2$.
In particular $\rho(d)$ is a multiplicative arithmetic function of $d$. One easily calculates that 
$\rho(q)=1+1/q$ and 
$$
\rho(p)=-p\left(-\frac{3}{p}+\frac{2}{p^2}\right)=
3-\frac{2}{p}
$$
if $p\neq q$.
It is now clear that all the hypotheses of conditions ($W_0$) and ($W_1$) in the sieve are satisfied, with 
$\kappa=3$ and $c_1>0$ a suitable  absolute constant. 
In view of \eqref{eq:q-size} and \eqref{eq:XX}, we have
$$
\frac{B^{2/3}}{\log\log B}\ll X\ll B^{2/3}.
$$
Hence we deduce that
\begin{align*}
\sum_{d\leq X^\tau} |R_d(\cA)| 
&\ll
 X^{\tau(1+\ve)} B^{1/3+\ve} + X^{\tau(3/2+\ve)}B^{1/6+\ve}\\
&\ll
 X^{1/2+\tau+2\ve}  + X^{1/4+3\tau/2+2\ve},
\end{align*}
whence 
condition ($W_2$) is satisfied for any $\tau<1/2$, with $c_2=1$ and
suitable $c_3=c_3(\ve)\geq 2$.  
Moreover, in view of \eqref{eq:123}, it is clear that we may take any 
$\mu>4$ in \eqref{eq:W3}.

Our efforts up to this point  justify taking 
$$
\kappa=3, \quad \mu>4, \quad \tau>\frac{1}{2}
$$
in the sieve assumptions. 
We thus arrive at the lower bound \eqref{eq:sieve} for
$S_t(\cA)=L_t(B;q)$, provided that
$$
t>
4-1/\beta_3+4\log \beta_3.
$$
For the choice $\kappa=3$ it follows from the tabulation of sieving
limits in Diamond and Halberstam \cite[Table 17.1]{DH} that 
$
\beta_3=6.640859. 
$
Hence we may take $t\geq 12$ in \eqref{eq:sieve}, with which choice one
has
$$
L_{t}(B;q) \gg \frac{B^{2/3}}{\log^3 B \log\log B}\gg
\frac{B^{2/3}}{\log^4 B},
$$
uniformly in $q$.  Once inserted into 
  \eqref{eq:fix-eta4} and combined with the prime number theorem,  
this therefore establishes the lower bound 
for $M_t(B)$ in \eqref{eq:needed} with $t=12$ and $k=-5$, as required
to   
complete the proof of Theorem \ref{t:sat}.

\section{Technical tools}\label{s:prelim}

In this section we  collect together the technical lemmas that will feature in our proof of Theorems 
\ref{t:fixedabqX} and \ref{mainresult}.  We will use the following
approximation of the function $\psi(x)$ using trigonometric
polynomials due to Vaaler (see Graham and Kolesnik \cite[Theorem
A.6]{grahamkolesnik}, for example).

\begin{lemma}\label{l:v}
Let $H>0$. Then 
there exist coefficients $a_{h}\in \RR$ satisfying $a_{h}\ll 1/|h|$,  such that 
$$
\left|
\psi(x)-
\sum\limits_{1\le |h|\le H} a_{h} \e(hx)\right| \leq
\frac{1}{H+1}  \sum\limits_{|h|\le H} \left(1-\frac{|h|}{H+1}\right) \e(hx).
$$
\end{lemma}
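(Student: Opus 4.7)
The plan is to follow Vaaler's original construction, which produces simultaneous trigonometric polynomial approximants and majorants for $\psi$ via the Beurling--Selberg extremal functions. The foundational object is Beurling's entire function $B(z)$ of exponential type $2\pi$: the unique such function majorising $\sign(x)$ on $\RR$ with minimal $L^1$ deviation $\int_{\RR}\bigl(B(x)-\sign(x)\bigr)\d x = 1$. From $B$ one manufactures an odd entire function
$$
\psi^*(z) = -\tfrac{1}{2}\bigl(B(-z) - B(z)\bigr)
$$
of exponential type $2\pi$ satisfying the pointwise bound $|\psi^*(x) - \psi(x)| \le \tfrac{1}{2}\bigl(\sin(\pi x)/(\pi x)\bigr)^2$ on $\RR$. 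Note that $\bigl(\sin(\pi x)/(\pi x)\bigr)^2$ is itself the Fourier transform of the triangle function on $[-1,1]$, so it has exponential type $2\pi$ as well.

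Next, I would rescale: replace $x$ by $(H+1)x$ so that both functions have exponential type $2\pi(H+1)$, which is equivalent to saying their Fourier transforms are supported in $[-(H+1), H+1]$. Periodising via Poisson summation then yields a trigonometric polynomial
$$
V_H(x) = \sum_{1 \le |h| \le H} a_h\,\e(hx),
$$
whose coefficients are precisely the samples of the Fourier transform of the rescaled $\psi^*$ at integer points, modulo a constant removed by the absence of the $h=0$ term. Vaaler computes these samples explicitly and one reads off the required bound $a_h \ll 1/|h|$ from the $1/|\xi|$ decay of $\widehat{\psi^*}$. Performing the same Poisson summation on the majorant $\bigl(\sin(\pi(H+1)x)/(\pi(H+1)x)\bigr)^2$ produces exactly the Fej\'er kernel
$$
\frac{1}{H+1} \sum_{|h|\le H}\left(1 - \frac{|h|}{H+1}\right) \e(hx),
$$
appearing on the right-hand side of the claim.

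The main obstacle is the analytic construction of $B(z)$ itself, together with verification of its extremality and the sharp constants governing the pointwise majorisation $|\psi^* - \psi| \le \tfrac{1}{2}(\sin(\pi x)/(\pi x))^2$. These ingredients are classical but delicate; rather than reconstructing them from scratch one would simply quote \cite[Theorem~A.6]{grahamkolesnik}, where the lemma is recorded in the precise form needed here, with the coefficient bound $a_h \ll 1/|h|$ already verified. In the context of the present paper this citation is all that is required, since Lemma~\ref{l:v} is used only as a black box to convert truncation of the sawtooth function into exponential sums suitable for handling by Gauss sum machinery.
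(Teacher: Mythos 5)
The paper gives no proof of this lemma, deferring entirely to Graham and Kolesnik \cite[Theorem~A.6]{grahamkolesnik}, and your proposal arrives at the same conclusion: quote that reference. Your preceding outline of the Beurling--Selberg extremal function, the rescaling to exponential type $2\pi(H+1)$, the Poisson-summation periodisation, and the emergence of the Fej\'er kernel on the right-hand side is a faithful (if uncheckably compressed) summary of what lies behind the citation, so this is essentially the same treatment the paper uses.
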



This result will lead to the intervention of  exponential sums,
which once evaluated will also produce certain types of character
sums. To handle these we will require the following 
variant of Heath-Brown's large sieve for real characters \cite[Corollary 4]{heathbrown}.

\begin{lemma}\label{lsreal} Let $\ve>0$, let $M, N\in \NN$, and let $a_1,...,a_M$ and $b_1,...,b_N$ be arbitrary complex numbers
satisfying $|a_m|$, $|b_n|\le 1$. Then
$$
\sum\limits_{\substack{m\le M\\ (m,2)=1}} \sum\limits_{n\le N} a_mb_n
\left(\frac{n}{m}\right) \ll
(MN)^{\varepsilon}\left(MN^{1/2}+M^{1/2}N\right). 
$$
\end{lemma}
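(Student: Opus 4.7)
The plan is to combine Cauchy--Schwarz with Heath-Brown's large sieve inequality for real characters \cite{heathbrown}. A preliminary reduction is required because the large sieve in \cite{heathbrown} concerns moduli which are squarefree, whereas our $m$ is only required to be odd.

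First I would factor each odd $m\le M$ uniquely as $m=m_{1}m_{2}^{2}$ with $m_{1}$ squarefree and $(m_{2},2)=1$. The Jacobi symbol identity
$$
\lp\frac{n}{m_{1}m_{2}^{2}}\rp = \lp\frac{n}{m_{1}}\rp\cdot \mathbf{1}_{(n,m_{2})=1}
$$
(which uses both the total multiplicativity of the Jacobi symbol in its bottom entry and the fact that $(n/m_{1})$ already vanishes when $(n,m_{1})>1$) lets one decompose the original double sum $S$ as a sum over $m_{2}\le \sqrt{M}$ of inner sums $S_{m_{2}}$ of the form
$$
S_{m_{2}} = \sum_{\substack{m_{1}\le M/m_{2}^{2}\\ m_{1}\text{ odd squarefree}}} a_{m_{1}m_{2}^{2}} \sum_{n\le N} b_{n}' \lp\frac{n}{m_{1}}\rp,
$$
with $b_{n}':=b_{n}\mathbf{1}_{(n,m_{2})=1}$ still satisfying $|b_{n}'|\le 1$.

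For each fixed $m_{2}$, writing $M':=M/m_{2}^{2}$, I would apply Cauchy--Schwarz to the $m_{1}$-sum and then bound the resulting mean square using Heath-Brown's Corollary~4 from \cite{heathbrown}, which delivers an estimate of order $(M'N)^{\ve}(M'+N)\cdot N$. This yields
$$
|S_{m_{2}}| \ll (MN)^{\ve}\left(\frac{M}{m_{2}^{2}}N^{1/2} + \left(\frac{M}{m_{2}^{2}}\right)^{1/2} N\right).
$$
Summing over $m_{2}\le \sqrt{M}$ then produces the claimed bound, since $\sum_{m_{2}} m_{2}^{-2}$ converges absolutely and $\sum_{m_{2}\le\sqrt M} m_{2}^{-1}\ll \log M$ is absorbed into the $(MN)^{\ve}$ factor after a harmless redefinition of $\ve$.

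The only real obstacle is locating a version of the large sieve in \cite{heathbrown} that allows an \emph{arbitrary} (not necessarily squarefree) sequence $b_{n}$; this is precisely what Corollary~4 provides, and consequently the square-factoring trick is needed only on the $m$ side and not also on the $n$ side.
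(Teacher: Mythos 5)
Your proposal is correct as a proof, and the core mechanism (Cauchy--Schwarz in $m$ followed by Heath-Brown's mean-value estimate for real characters) is exactly what the paper has in mind --- the paper in fact gives no explicit proof and simply presents the lemma as a ``variant of \cite[Corollary 4]{heathbrown}''. Where you diverge is in the preliminary reduction. You assume Heath-Brown's Corollary~4 is restricted to squarefree moduli, and so you decompose each odd $m$ as $m=m_1m_2^2$ with $m_1$ squarefree, sum over $m_2$, and apply the large sieve for each fixed $m_2$; this is a valid and self-contained argument (the factorisation is unique, the Jacobi-symbol identity $(\tfrac{n}{m_1m_2^2})=(\tfrac{n}{m_1})\mathbf{1}_{(n,m_2)=1}$ is correct, and the final sums over $m_2$ converge as you say, with the $\log M$ from $\sum_{m_2\le\sqrt M}m_2^{-1}$ absorbed into the $\ve$). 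However, the premise is off: Heath-Brown's Corollary~4 already allows \emph{all} odd moduli, not just squarefree ones, its right-hand side being controlled by $\sum_{n_1n_2=\square}|a_{n_1}a_{n_2}|$, which for $|a_n|\le 1$ is $\ll N^{1+\ve}$. Consequently the intended derivation is a one-step application: Cauchy--Schwarz over odd $m\le M$ gives
\[
\Bigl|\sum_{\substack{m\le M\\ (m,2)=1}}a_m\sum_{n\le N}b_n\bigl(\tfrac{n}{m}\bigr)\Bigr|^2
\le M\sum_{\substack{m\le M\\ (m,2)=1}}\Bigl|\sum_{n\le N}b_n\bigl(\tfrac{n}{m}\bigr)\Bigr|^2
\ll_\ve M\,(MN)^{\ve}(M+N)N^{1+\ve},
\]
and taking square roots yields the claim. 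Your squarefree reduction is therefore not wrong, but it amounts to re-deriving the passage from squarefree to general odd moduli that Heath-Brown already carried out internally in the proof of Corollary~4; the two routes arrive at the same bound, the direct one just being shorter.
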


We end this section with an explicit evaluation of the quadratic Gauss sums
\begin{equation} \label{gaussumdef}
\mathcal{G}(s,t;u):=\sum\limits_{n=1}^u \e\left(\frac{sn^2+tn}{u}\right),
\end{equation}
for given non-zero integers $s,t,u$ such that $u\geq 1$.
Let 
$$
\delta_n:=\begin{cases} 0, & \mbox{ if }n\equiv 0\bmod{2}, \\ 1 & \mbox{ if } n\equiv 1\bmod{2}, \end{cases}
\quad
\epsilon_n:=\begin{cases} 1, & \mbox{ if }n\equiv 1\bmod{4}, \\ i, & \mbox{ if } n\equiv 3\bmod{4}.
\end{cases}
$$
The next lemma gives the value of $\mathcal{G}(s,t;u)$ if $(s,u)=1$.

\begin{lemma} \label{gaussevlemma} 
Suppose that $(s,u)=1$. Then we have the following.
\begin{itemize}
\item[(i)] If $u$ is odd, then
\begin{equation} \label{gaussev}
\mathcal{G}(s,t;u)=\epsilon_u \sqrt{u}  \left(\frac{s}{u}\right) \e\left(-\frac{\overline{4s}t^2}{u}\right).
\end{equation}
\item[(ii)] If $u=2v$ with $v$ odd, then
\begin{equation} \label{gaussev3}
\mathcal{G}(s,t;u)=2  \delta_t \epsilon_v\sqrt{v} \left(\frac{2s}{v}\right) \e\left(-\frac{\overline{8s}t^{2}}{v}\right).
\end{equation}
\item[(iii)] If $4\mid u$, then
\begin{equation} \label{gaussev2}
\mathcal{G}(s,t;u)=(1+i) \varepsilon_s^{-1} (1-\delta_t) \sqrt{u}\left(\frac{u}{s}\right) \e\left(-\frac{\overline{s}t^{2}}{4u}\right).
\end{equation}
\end{itemize}
\end{lemma}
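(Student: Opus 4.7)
The plan is to reduce each case to the pure quadratic Gauss sum $G(s;u):=\mathcal{G}(s,0;u)=\sum_{n=1}^{u}\e(sn^{2}/u)$, whose classical evaluation gives $G(s;u)=\epsilon_{u}\sqrt{u}\left(\frac{s}{u}\right)$ when $u$ is odd and $G(s;u)=(1+i)\epsilon_{s}^{-1}\sqrt{u}\left(\frac{u}{s}\right)$ when $4\mid u$ (with $s$ necessarily odd). The three parts differ according to whether completing the square is available modulo $u$. For (i), $u$ is odd so $(2s,u)=1$, and I would use $sn^{2}+tn\equiv s(n+\overline{2s}t)^{2}-\overline{4s}t^{2}\pmod{u}$; after translating $n\mapsto n+\overline{2s}t$ this gives $\mathcal{G}(s,t;u)=\e(-\overline{4s}t^{2}/u)\,G(s;u)$, and hence \eqref{gaussev}.

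For (iii) I would first establish the vanishing when $t$ is odd: the substitution $n\mapsto n+u/2$ combined with $4\mid u$ yields $s(n+u/2)^{2}\equiv sn^{2}\pmod{u}$ (since $snu$ and $su^{2}/4$ are both divisible by $u$) while $t(n+u/2)\equiv tn+tu/2\pmod{u}$ produces an extra factor $\e(t/2)=-1$, forcing $\mathcal{G}(s,t;u)=0$. When $t=2t'$ is even, $s$ is odd and hence invertible modulo $u$, and $sn^{2}+tn\equiv s(n+\overline{s}t')^{2}-\overline{s}t'^{\,2}\pmod{u}$; translating then gives $\mathcal{G}(s,t;u)=\e(-\overline{s}t^{2}/(4u))\,G(s;u)$, which produces \eqref{gaussev2}.

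Part (ii), with $u=2v$ and $v$ odd, is the most delicate since $2$ is not invertible modulo $u$. I would first use the substitution $n\mapsto n+v$ together with $v^{2}\equiv v\pmod{2v}$ (since $v(v-1)$ is divisible by $2v$) to obtain $\mathcal{G}(s,t;2v)=(-1)^{s+t}\mathcal{G}(s,t;2v)$, forcing the sum to vanish unless $t$ is odd (recall $s$ is odd). Assuming $t$ odd, the CRT decomposition $1/(2v)\equiv 1/2+\overline{2}_{v}/v\pmod{1}$, where $\overline{2}_{v}$ is the inverse of $2$ modulo $v$, factors the phase; the $\bmod\,2$ contribution is trivial since $s,t$ odd imply $sn^{2}+tn\equiv n(n+1)\equiv 0\pmod{2}$. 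Grouping $n\in\ZZ/2v\ZZ$ by its residue modulo $v$ (each occurring exactly twice) then gives $\mathcal{G}(s,t;2v)=2\mathcal{G}(\overline{2}_{v}s,\overline{2}_{v}t;v)$, and applying (i) with the simplifications $\left(\frac{\overline{2}_{v}s}{v}\right)=\left(\frac{2s}{v}\right)$ (as Jacobi symbols are $\pm 1$) and $\overline{4\overline{2}_{v}s}\cdot(\overline{2}_{v})^{2}\equiv \overline{2s}\cdot\overline{4}\equiv\overline{8s}\pmod{v}$ produces \eqref{gaussev3}. The main difficulty will be purely bookkeeping: verifying the vanishing criteria by translation symmetry in (ii) and (iii), and tracking the inverse calculations across the CRT decomposition in (ii); the essential analytic input, the classical evaluation of $G(s;u)$, will be taken as standard.
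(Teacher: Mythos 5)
Your proof is correct, and it takes a more self-contained route than the paper's. Where the paper reduces everything to $\mathcal{G}(\,\cdot\,,0;\,\cdot\,)$ by citing Estermann's Lemmas 3--6 and 9 and then (for $4\mid u$) invokes the Gauss sum reciprocity law $\mathcal{G}(s,0;u)\mathcal{G}(u,0;s)=\mathcal{G}(1,0;su)$, you reprove the needed reductions directly: completing the square and translating for (i) and the non-vanishing branch of (iii); a translation-symmetry argument ($n\mapsto n+u/2$, respectively $n\mapsto n+v$) for the two vanishing statements, which is a clean way to see $\delta_t$ and $1-\delta_t$; and a Chinese-remainder splitting $1/(2v)\equiv 1/2+\overline{2}_v/v \pmod{1}$ for (ii), noting that $n(sn+t)$ is always even when $s,t$ are both odd, so the mod-$2$ factor trivializes and the mod-$v$ piece folds into $2\mathcal{G}(\overline{2}_vs,\overline{2}_vt;v)$, after which (i) applies. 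The one place you lean on an uncited input rather than deriving it is the evaluation $G(s;u)=(1+i)\epsilon_s^{-1}\sqrt{u}\left(\frac{u}{s}\right)$ when $4\mid u$ and $s$ is odd, which the paper instead derives from \eqref{gauss} via reciprocity; this formula is indeed classical, so this is a legitimate shortcut but worth flagging as the only black box in your otherwise elementary argument. Your bookkeeping in (ii) — $\left(\frac{\overline{2}_v s}{v}\right)=\left(\frac{2s}{v}\right)$ and $\overline{4\overline{2}_v s}\cdot(\overline{2}_v)^2\equiv\overline{8s}\pmod v$ — is correct, and the interpretation of the phase $\e(-\overline{s}t'^2/u)$ as $\e(-\overline{s}t^2/(4u))$ in (iii) matches the usual abuse of notation in the literature. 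Overall: same statement, genuinely different (more hands-on) proof; the paper's citation-based route is shorter on the page, while yours is fully verifiable without consulting Estermann.
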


\begin{proof} (i) Let $u$ be odd and assume $(s,u)=1$. Then, by Lemmas 3 and 9 in \cite{estermann}, we have
$$
\mathcal{G}(s,t;u)=\e\left(-\frac{\overline{4s}t^2}{u}\right) \left(\frac{s}{u} \right) \mathcal{G}(1,0;u). 
$$
Gauss proved (see Nagell \cite[Theorem 99]{nagell}, for example) that
\begin{equation} \label{gauss}
\mathcal{G}(1,0;n)=\begin{cases} 
(1+i)\sqrt{n},  & \mbox{ if } n \equiv 0 \bmod 4, \\ 
\sqrt{n},  & \mbox{ if } n \equiv 1 \bmod 4, \\  
0,  & \mbox{ if } n \equiv 2 \bmod 4, \\
i\sqrt{n},  & \mbox{ if } n \equiv 3 \bmod 4,
\end{cases}
\end{equation}
from which \eqref{gaussev} follows.

(ii) Let $2\|u$ and assume $(s,u)=1$. Write $u=2v$ and note that
$2\nmid v$. If $2\mid t$ then 
$$
\mathcal{G}(s,t;2v)=\e\left(-\frac{\overline{s}t^2}{4u}\right)\mathcal{G}(s,0;2v)=0
$$
by Lemmas 4 and 9 in \cite{estermann}. If $2\nmid t$, then 
$$
\mathcal{G}(s,t;2v)=2\e\left(-\frac{\overline{8s} t^2}{v}\right)\mathcal{G}(2s,0;v)
$$
by Lemma 6 in \cite{estermann}. Now applying \eqref{gaussev} gives \eqref{gaussev3}.

(iii) Let $4\mid u$ and assume $(s,u)=1$. If $2\nmid t$, then $\mathcal{G}(s,t;u)=0$ by Lemma 5 in \cite{estermann}. Assume that $2\mid t$. Then, by Lemma 4 in \cite{estermann}, we have
$$
\mathcal{G}(s,t;u)=\e\left(-\frac{\overline{s}t^2}{4u}\right)\mathcal{G}(s,0;u).
$$
For $(s,u)=1$, the Gauss sum satisfies the reciprocity law 
$$
\mathcal{G}(s,0;u)\mathcal{G}(u,0;s)=\mathcal{G}(1,0;su).
$$
Noting that $s$ is odd and $4\mid su$, and applying \eqref{gaussev} to $\mathcal{G}(u,0;s)$ and \eqref{gauss} to $\mathcal{G}(1,0;su)$, we deduce \eqref{gaussev2}.
\end{proof}

\section{Analysis  of $\mathcal{S}$}\label{s:prelim-study}

In this section we begin in earnest our investigation of the sum $\cS$ presented in \eqref{Sdef}.
Recall that $c_{a,b,q}$ are arbitrary complex numbers and 
 $S\subset \mathbb{Z}^2\times \mathbb{N}$ is a finite set of triples $(a,b,q)$ such that $(ab,q)=1$, 
with 
$J:=\left(y_0,y_0+Y\right]$
and $I(a,b,q,y)$ given by \eqref{eq:I}, respectively.
We henceforth stipulate that 
$$
\mbox{domain}(f^+)=\mbox{domain}(f^-)=\mathcal{R},
$$
where 
\begin{equation} \label{cubo}
\mathcal{R}=(a_0,a_0+A]\times (b_0,b_0+B]\times (q_0,q_0+Q] \times (y_0,y_0+Y]
\end{equation}
is a half-open cuboid in $\mathbb{R}^4$ such that $S\times J\subset 
\mathcal{R}$.
We further suppose that $f^{\pm}(a,b,q,y)$ are continuous, have 
piecewise continuous partial derivatives with respect to the variables $a,b,y$, 
and 
satisfy $f^{+}\ge f^{-}$ in the whole domain $\mathcal{R}$. Moreover, we set
$$
X(a,b,q,y):=|I(a,b,q,y)|=f^+(a,b,q,y)-f^-(a,b,q,q).
$$

Our first step is rewrite the congruence 
$ax+by^2 \equiv 0 \bmod{q}$ in $\cS$ as
$$
x+\overline{a}by^2\equiv 0 \bmod{q},
$$
where  $\overline{a}$ denotes the multiplicative inverse of $a$ modulo $q$. 
It follows that
\begin{align*} 
 \sum\limits_{\substack{x\in I(a,b,q,y) \\ ax+by^2 \equiv 0 \bmod{q}}} 
 \hspace{-0.3cm}
 1  &=
 \left[\frac{f^+(a,b,q,y)}{q}+\frac{\overline{a}by^2}{q}\right]-
\left[\frac{f^-(a,b,q,y)}{q}+\frac{\overline{a}by^2}{q}\right] \\
&= \frac{X(a,b,q,y)}{q} - \psi\left(\frac{f^+(a,b,q,y)}{q}+\frac{\overline{a}by^2}{q}\right)+
\psi\left(\frac{f^-(a,b,q,y)}{q}+\frac{\overline{a}by^2}{q}\right).
\end{align*}
We may therefore write
 \begin{equation} \label{decomp}
 \mathcal{S}=\mathcal{M}-\mathcal{E}^{+}+\mathcal{E}^{-},
 \end{equation}
 where
 \begin{equation} \label{mainterm}
 \mathcal{M}:=\sum\limits_{(a,b,q)\in S}  \frac{c_{a,b,q}}{q} \sum\limits_{\substack{y\in J\\ (y,q)=1}}   X(a,b,q,y)
 \end{equation}
 is the main term and
$$
\mathcal{E}^{\pm}:=\sum\limits_{(a,b,q)\in S}  c_{a,b,q} \sum\limits_{\substack{y\in J\\ (y,q)=1}} 
\psi\left(\frac{f^{\pm}(a,b,q,y)}{q}+\frac{\overline{a}by^2}{q}\right)
$$
are error terms.  The next result  is an easy consequence of 
Lemma \ref{l:v} and  transforms these error terms into exponential sums.

\begin{lemma} 
Let $H>0$. Then we have
$\left| \mathcal{E}^{\pm} \right| \ll \mathcal{E}+\mathcal{F}^{\pm},
$
where 
\begin{align}\label{Eerror}
\mathcal{E}
&:=\frac{Y}{H} \sum\limits_{(a,b,q)\in S} |c_{a,b,q}|,\\
\label{Ferror}
\mathcal{F}^{\pm}&:=\sum\limits_{1\le h\le H} \frac{1}{h} \left| \sum\limits_{(a,b,q)\in S} 
C_{a,b,q}S_h^{\pm}(a,b,q)\right|,
\end{align}
with $C_{a,b,q}:=c_{a,b,q}+|c_{a,b,q}| $ and 
$$
S_h^{\pm}(a,b,q):=\sum\limits_{\substack{y\in J\\ (y,q)=1}} \\\e\left(h\cdot \frac{f^{\pm}(a,b,q,y)}{q}\right) \\\e\left(h\cdot \frac{\overline{a}by^2}{q}\right).
$$
\end{lemma}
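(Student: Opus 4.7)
My plan is to apply Vaaler's approximation (Lemma~\ref{l:v}) to $\psi$ at the argument $x^{\pm}_{y} := f^{\pm}(a,b,q,y)/q + \overline{a}by^{2}/q$, which writes $\psi(x) = P(x) + E(x)$ with $P(x) := \sum_{1\le|h|\le H}a_{h}\e(hx)$ satisfying $|a_{h}|\ll 1/|h|$, and with the error majorised by the non-negative Fej\'er-type polynomial $K(x) := (H+1)^{-1}\sum_{|h|\le H}(1-|h|/(H+1))\e(hx)$, so that $|E(x)|\le K(x)$ pointwise.

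I would substitute this decomposition into the definition of $\mathcal{E}^{\pm}$ and bound its two pieces separately. For the $P$-piece, swapping summation orders identifies it as $\sum_{1\le|h|\le H}a_{h}\sum_{(a,b,q)\in S}c_{a,b,q}S_{h}^{\pm}(a,b,q)$, and the bound $|a_{h}|\ll 1/|h|$ together with the folding of $h<0$ onto $h>0$ by complex conjugation produces a contribution of order $\sum_{1\le h\le H}h^{-1}|\sum_{(a,b,q)}c_{a,b,q}S_{h}^{\pm}|$. For the $E$-piece, $|E|\le K$ yields the upper bound $\sum_{(a,b,q)}|c_{a,b,q}|\sum_{y}K(x_{y}^{\pm})$; expanding $K$ and isolating the $h=0$ term gives a contribution $\ll (H+1)^{-1}Y\sum|c_{a,b,q}|\ll\mathcal{E}$, while the $1\le|h|\le H$ terms carry weight $(H+1-|h|)/(H+1)^{2}\le 1/|h|$ and contribute at most $\sum_{1\le h\le H}h^{-1}|\sum_{(a,b,q)}|c_{a,b,q}|S_{h}^{\pm}|$.

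The final step is to repackage the two resulting $h$-sums into the single sum $\mathcal{F}^{\pm}$ with combined coefficient $C_{a,b,q}=c_{a,b,q}+|c_{a,b,q}|$. The main obstacle is precisely this repackaging, since the na\"ive triangle-inequality passage $|\sum C_{a,b,q}S_{h}^{\pm}|\le|\sum c_{a,b,q}S_{h}^{\pm}|+|\sum|c_{a,b,q}|S_{h}^{\pm}|$ runs in the unhelpful direction. The cleaner route is to feed $C_{a,b,q}$ through the Vaaler decomposition from the outset: write $c_{a,b,q}\psi = C_{a,b,q}\psi - |c_{a,b,q}|\psi$, apply the Vaaler analysis to the first summand so that $C_{a,b,q}$ naturally appears in front of the $S_{h}^{\pm}$, and handle the subtraction $|c_{a,b,q}|\psi$ by the same Vaaler expansion, where its main $P$-contribution is absorbed into the existing $C$-weighted $h$-sums (since $|c_{a,b,q}|$ also contributes to $C_{a,b,q}$) and its $E$-contribution is again absorbed into $\mathcal{E}$. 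Once this bookkeeping is carried through, the stated bound $|\mathcal{E}^{\pm}|\ll \mathcal{E}+\mathcal{F}^{\pm}$ follows directly.
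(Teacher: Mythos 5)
Your Vaaler analysis — splitting $\psi = P + E$ with $|E|$ majorised by the non-negative Fej\'er polynomial $K$, reading off $\mathcal{E}$ from the $h=0$ term of $K$, and converting the remaining frequencies into $h$-sums weighted by $1/h$ — is the intended mechanism, and you correctly identify the one delicate step. But your proposed fix does not close it. Running the split $c_{a,b,q}\psi = C_{a,b,q}\psi - |c_{a,b,q}|\psi$ through Vaaler still leaves a $|c_{a,b,q}|$-weighted $h$-sum that is in no way controlled by the $C_{a,b,q}$-weighted one; the claim that the $|c|$-contribution ``is absorbed into the existing $C$-weighted $h$-sums since $|c|$ also contributes to $C$'' is the triangle inequality running the wrong way in disguise. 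One simply cannot bound $\bigl|\sum_{(a,b,q)}|c_{a,b,q}|S_h^\pm\bigr|$ by $\bigl|\sum_{(a,b,q)}C_{a,b,q}S_h^\pm\bigr|$: take $S$ to consist of a single triple with $c_{a,b,q}=-1$, so that $C_{a,b,q}=0$ and the latter vanishes while the former equals $|S_h^\pm|$.

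That same example shows the difficulty is partly inherited from the target statement, which is not literally correct as printed: with $c_{a,b,q}=-1$, $q=1$ and $f^\pm$ a small positive constant, every $\psi$-value is close to $-\tfrac12$, so $\mathcal{E}^\pm=-\sum_y\psi(\cdots)$ has order $Y$, whereas $\mathcal{E}+\mathcal{F}^\pm = Y/H + 0$ is much smaller for large $H$. What the Vaaler argument actually proves — and all the paper ever uses downstream, since it only invokes $|C_{a,b,q}|\le 2$ and in \eqref{Dd} immediately re-splits $D_{u,v}=d_{u,v}+|d_{u,v}|$ back into its two summands — is
\[
|\mathcal{E}^\pm| \ \ll\ \mathcal{E}\ +\ \sum_{1\le h\le H}\frac1h\biggl|\sum_{(a,b,q)\in S} c_{a,b,q}\,S_h^{\pm}(a,b,q)\biggr|\ +\ \sum_{1\le h\le H}\frac1h\biggl|\sum_{(a,b,q)\in S} |c_{a,b,q}|\,S_h^{\pm}(a,b,q)\biggr|,
\]
together with the conjugate variant of the first sum coming from the negative frequencies $S_{-h}^\pm=\overline{S_h^\pm}$ when the $c_{a,b,q}$ are genuinely complex. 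Your work before the final paragraph essentially establishes this; it should be left in that form rather than forced into a single $C_{a,b,q}$-weighted sum, and the ``repackaging'' paragraph should be deleted.
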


We proceed to reduce our exponential sums $S_h^{\pm}(a,b,q)$ to complete quadratic Gauss sums.
First we remove the factor $\e\left(h\cdot f^{\pm}(a.b,q,y)/q\right)$ using partial summation, obtaining
\begin{align*} 
S_h^{\pm}(a,b,q) =~& \e\left(h\cdot \frac{f^{\pm}(a,b,q,y_0+Y)}{q}\right)T_h(a,b,q,y_0+Y)\\  &-
\frac{2\pi ih}{q}\int\limits_{y_0}^{y_0+Y} \left(\frac{\partial}{\partial t} f^{\pm}(a,b,q,t)\right) \e\left(h\cdot \frac{f^{\pm}(a,b,q,t)}{q}\right) T_h(a,b,q,t) \dif t,
\end{align*}
where
$$
T_h(a,b,q,t):=\sum\limits_{\substack{y_0<y\le t\\ (y,q)=1}} \e\left(h\cdot \frac{\overline{a}by^2}{q}\right).
$$
Next we remove the coprimality condition $(y,q)=1$ using M\"obius inversion, getting
$$
T_h(a,b,q,t):=\sum\limits_{e\mid q} \mu(e) \sum\limits_{y_0/e<y\le t/e} \e\left(he\cdot \frac{\overline{a}by^2}{q/e}\right).
$$
We remove common factors by writing
\begin{equation} \label{primes}
q'=\frac{q/e}{(he,q/e)}, \quad h'=\frac{he}{(he,q/e)}
\end{equation}
and observing that
$$
T_h(a,b,q,t)=\sum\limits_{e\mid q} \mu(e) \sum\limits_{y_0/e<y\le t/e} \e\left(\frac{h'\overline{a}by^2}{q'}\right), 
$$
with  $(h',q')=1$. 
Here we note that $q'$ and $h'$ depend on $e$, $q$ and $h$. The inner
sum 
is an incomplete quadratic Gauss sum which we complete by writing
\begin{align*}
\sum\limits_{y_0/e<y\le t/e} \e\left(\frac{h'\overline{a}by^2}{q'}\right) & =\sum\limits_{n=1}^{q'} \e\left(\frac{h'\overline{a}bn^2}{q'}\right) \cdot \frac{1}{q'}\cdot \sum\limits_{k=1}^{q'} 
\sum\limits_{y_0/e<l\le t/e} \e\left(k\cdot \frac{n-l}{q'}\right)\\
& = \frac{1}{q'} \cdot \sum\limits_{k=1}^{q'}  r_e(k,q';t) \mathcal{G}(h'\overline{a}b,k;q'),
\end{align*}
where $\mathcal{G}(h'\overline{a}b,k;q')$ is given by  \eqref{gaussumdef} and
$$
r_e(k,q';t):=\sum\limits_{y_0/e<l\le t/e} \e\left(-\frac{kl}{q'}\right) \ll
\min\left\{Y/e,\|k/q'\|^{-1}\right\},
$$
if  $y_0\le t\le y_0+Y$.

Let
$$
g^{\pm}_h(a,b,q,t):=\left(\frac{\partial}{\partial t} f^{\pm}(a,b,q,t)\right)\e\left(h\cdot \frac{f^{\pm}(a,b,q,t)}{q}\right).
$$
Our work so far has shown that 
\begin{align*}
S_h^{\pm}(a,b,q) = ~& \sum\limits_{e\mid q} \frac{\mu(e)}{q'}\cdot \sum\limits_{k=1}^{q'} \mathcal{G}(h'\overline{a}b,k;q')B(e,k),
\end{align*}
with 
\begin{align*}
B(e,k):=
 \e\left(h\cdot \frac{f^{\pm}(a,b,q,y_0+Y)}{q}\right) r_e(k,q';y_0+Y) 
-  \frac{2\pi ih}{q}\int\limits_{y_0}^{y_0+Y} g^{\pm}_h(a,b,q,t)r_e(k,q';t) \dif t.
\end{align*}

Returning to the error terms $\mathcal{F}^{\pm}$ in \eqref{Ferror}, we
deduce that 
$$
\mathcal{F}^{\pm}
\ll  \sum\limits_{h\le H} \sum\limits_{q} \sum\limits_{e\mid q} 
\frac{1}{hq'} \sum\limits_{k=1}^{q'}
\min\left\{Y/e,\|k/q'\|^{-1}\right\}\left(R_1(e,h,q,k)+R_2(e,h,q,k)\right), 
$$
with
\begin{align*}
R_1(e,h,q,k)
&:= \left| 
\sum\limits_{\substack{a,b\\ (a,b,q)\in S}} C_{a,b,q}\mathcal{G}(h'\overline{a}b,k;q')
\e\left(h\cdot \frac{f^{\pm}(a,b,q,y_0+Y)}{q}\right) \right|,\\
R_2(e,h,q,k)&:= \frac{h}{q} \int\limits_{y_0}^{y_0+Y} \left| 
\sum\limits_{\substack{a,b\\ (a,b,q)\in S}} C_{a,b,q}\mathcal{G}(h'\overline{a}b,k;q')
g_{h}^{\pm}(a,b,q,t) \right| \dif t. 
\end{align*}

Now we are ready to evaluate $R_1$ and $R_2$ using the formulae for
Gauss sums in Lemma~\ref{gaussevlemma}.  
Since we get slightly different formulae in the cases (i), (ii), (iii), it is reasonable to break the term 
on the right-hand side of our estimate for $\mathcal{F}^{\pm}$ into $\mathcal{F}_1^{\pm}$, $\mathcal{F}_2^{\pm}$ and $\mathcal{F}_4^{\pm}$, where $\mathcal{F}_1^{\pm}$ denotes the 
contribution of odd moduli $q'$,  $\mathcal{F}_2^{\pm}$ denotes the contribution of moduli with $2\|q'$, and $\mathcal{F}_4^{\pm}$ denotes the 
contribution of moduli with $4\mid q'$. For $i=1,2,4$, we define 
$$
\xi_i(q'):=\begin{cases} 
1, & \mbox{ if $i=1$ and $q'$ is odd,}\\ 
1, & \mbox{ if $i=2$ and $2\|q'$,} \\ 
1, & \mbox{ if $i=4$
and $4\mid q'$,} \\ 
0, & \mbox{otherwise}.      
\end{cases}
$$
We may therefore write
\begin{equation} \label{L1def}
\mathcal{F}_i^{\pm}= \sum\limits_{h\le H} \sum\limits_{q}  \sum\limits_{e\mid q} 
\frac{\xi_i(q')}{hq'} \sum\limits_{k=1}^{q'} \min\left\{Y/e,\|k/q'\|^{-1}\right\}\left(R_1(e,h,q,k)+R_2(e,h,q,k)\right),
\end{equation}
for $i=1,2,4$.

For brevity, we only evaluate $R_1$ and $R_2$ when $q'$ is odd, which is the relevant case for the treatment of
$\mathcal{F}_1^{\pm}$. The cases $2\|q'$ and $4\mid q'$ can each be handled similarly. 
If $(q',2h')=1$, then Lemma \ref{gaussevlemma}(i) yields
$$
\mathcal{G}(h'\overline{a}b,k;q')=\epsilon_{q'} \sqrt{q'} \cdot \left(\frac{h'ab}{q'}\right) \e\left(-\frac{\overline{4bh'}\cdot ak^2}{q'}\right).
$$
Hence, in this case we have
\begin{equation} \label{R1}
R_1(e,h,q,k)= \sqrt{q'} \left| 
\sum\limits_{\substack{a,b\\ (a,b,q)\in S}} C_{a,b,q} \left(\frac{ab}{q'}\right) \e\left(-\frac{\overline{4bh'}\cdot ak^2}{q'}\right)
\e\left(h\cdot \frac{f^{\pm}(a,b,q,y_0+Y)}{q}\right) \right|
\end{equation}
and 
\begin{equation} \label{R2}
R_2(e,h,q,k)= \frac{h}{q} \cdot \sqrt{q'}\int\limits_{y_0}^{y_0+Y} \left| 
\sum\limits_{\substack{a,b\\ (a,b,q)\in S}} C_{a,b,q} \left(\frac{ab}{q'}\right) \e\left(-\frac{\overline{4bh'}\cdot ak^2}{q'}\right)
g_{h}^{\pm}(a,b,q,t) \right| \dif t. 
\end{equation}
To proceed further, we need to remove the weight functions $f^{\pm}$ and
$g_{h}^{\pm}$.

Recall \eqref{cubo}. We are now ready to impose a suitable constraint on the  partial derivatives
of $f^{\pm}$, wherever they are defined. We will assume that
\begin{equation} \label{fcond}
\left| \frac{\partial^{i+j+k}f^{\pm}}{\partial a^i \partial b^j
    \partial y^k} (a,b,q,y) \right| \leq \alpha^i \beta^j  
\tau^k F\quad 
\end{equation}
in $\mathcal{R}$ for $i,j,k\in \{0,1\}$ such that $i+j+k\not=0$, 
where $\alpha,\beta,\gamma,F$ are suitable non-negative numbers. We
shall also suppose that 
\begin{equation} \label{Hcond'}
H\ge \frac{q_0}{F}
\end{equation}
and set
\begin{equation} \label{Deltadef}
\Delta_H:=\left(1+\frac{HF\alpha A}{q_0}\right)\left(1+\frac{HF\beta B}{q_0}\right)\left(1+\frac{HF\tau Y}{q_0}\right).
\end{equation}
We now repeatedly apply partial summation with respect to $a$ and $b$ to remove the weight functions $f^{\pm}$ and $g_{h}^{\pm}$ in \eqref{R1} and \eqref{R2}. 
Then we interchange the integrals arising in this process with the sums on the right-hand side of \eqref{L1def}. 
Finally, we estimate the resulting integrals by multiplying their lengths with the supremums of their integrands,
which we bound using \eqref{fcond}. Taking \eqref{Hcond'} into consideration, we arrive at the bound for $\mathcal{F}_1^{\pm}$ in the following Theorem. 
By a parallel treatment, we obtain the corresponding bounds for
$\mathcal{F}_2^{\pm}$ and $\mathcal{F}_4^{\pm}$. 
  
\begin{thm} \label{generalbound} 
Assume the condition \eqref{fcond} and let $H$ satisfy \eqref{Hcond'}. Then we have
$$
\mathcal{F}^{\pm}\ll \mathcal{F}_1^{\pm}+\mathcal{F}_2^{\pm}+\mathcal{F}_4^{\pm},
$$
where 
$$
\mathcal{F}_i^{\pm} \ll \Delta_H \sup\limits_{(\eta,\theta)\in \mathbb{R}^2} \sum\limits_{h\le H} \sum\limits_{q} \sum\limits_{e\mid q} 
\frac{\xi_i(q')}{h\sqrt{q'}} \sum\limits_{k=0}^{q'-1} \min\left\{Y/e,\|k/q'\|^{-1}\right\} |R^{(i)}(\eta,\theta;e,h,q,k)|
$$
for $i=1,2,4$, with
\begin{align} \label{Ruvdhqk}
R^{(1)}(\eta,\theta;e,h,q,k)
&:= \sum\limits_{\substack{a\le \eta,\ b\le \theta\\ (a,b,q)\in S}} C_{a,b,q} \left(\frac{ab}{q'}\right) \e\left(-\frac{\overline{4bh'}\cdot ak^2}{q'}\right),\\
 \label{Ruvdhqk2}
R^{(2)}(\eta,\theta;e,h,q,k)
&:= \delta_k \sum\limits_{\substack{a\le \eta,\ b\le \theta\\ (a,b,q)\in S}} C_{a,b,q} \left(\frac{ab}{q'/2}\right) \e\left(-\frac{\overline{8bh'}\cdot ak^2}{q'/2}\right),\\
 \label{Ruvdhqk3}
R^{(4)}(\eta,\theta;e,h,q,k)
&:= (1-\delta_k) \sum\limits_{\substack{a\le \eta,\ b\le \theta\\ (a,b,q)\in S}} \epsilon_{h'ab}^{-1} C_{a,b,q} \left(\frac{q'}{ab}\right) 
\e\left(-\frac{\overline{bh'}\cdot ak^2}{4q'}\right).
\end{align}
\end{thm}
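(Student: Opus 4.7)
The decomposition $\mathcal{F}^{\pm}\ll \mathcal{F}_1^{\pm}+\mathcal{F}_2^{\pm}+\mathcal{F}_4^{\pm}$ with the $\mathcal{F}_i^{\pm}$ given by \eqref{L1def} is already in place, so what remains is to prove an upper bound of the claimed shape for each individual $\mathcal{F}_i^{\pm}$. The plan is to focus on $\mathcal{F}_1^{\pm}$: the cases $2\|q'$ and $4\mid q'$ proceed in parallel, using parts (ii) and (iii) of Lemma~\ref{gaussevlemma} in place of (i), which accounts for the parity factors $\delta_k$ and $1-\delta_k$, the factor $\epsilon_{h'ab}^{-1}$, and the rescalings of the modulus appearing in \eqref{Ruvdhqk2} and \eqref{Ruvdhqk3}.

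Starting from the Gauss-sum-evaluated expressions \eqref{R1} and \eqref{R2}, the inner double sum over $(a,b)$ in each has the shape
$$\sum_{\substack{a,b\\ (a,b,q)\in S}} B_{a,b}\, G(a,b),$$
where the character-type weight $B_{a,b}=C_{a,b,q}\lp\frac{ab}{q'}\rp\e\lp-\frac{\overline{4bh'}\,ak^2}{q'}\rp$ matches the summand of $R^{(1)}(\eta,\theta;e,h,q,k)$ in \eqref{Ruvdhqk}, and $G(a,b)$ is the slowly varying weight $\e(hf^{\pm}(a,b,q,y_0+Y)/q)$ in $R_1$ or $g_h^{\pm}(a,b,q,t)$ in $R_2$. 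Two-dimensional Abel summation in $(a,b)$ then removes $G$ at the expense of replacing the sum by $\sup_{\eta,\theta}|R^{(1)}(\eta,\theta;e,h,q,k)|$ multiplied by the sum of $|G|_{\infty}$ and its one-variable and mixed total variations in $a$ and $b$. By the chain rule and \eqref{fcond}, each derivative of $G$ with respect to $a$ (respectively $b$) costs at worst $hF\alpha/q\le HF\alpha/q_0$ (respectively $HF\beta/q_0$) in view of \eqref{Hcond'}; integrating over $a\in(a_0,a_0+A]$ or $b\in(b_0,b_0+B]$ then produces the factors $1+HF\alpha A/q_0$ and $1+HF\beta B/q_0$.

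For $R_1$ this already gives $|R_1|\ll \sqrt{q'}\,(1+HF\alpha A/q_0)(1+HF\beta B/q_0)\sup|R^{(1)}|$. For $R_2$ I interchange the $t$-integral with the outer sums in \eqref{L1def} and bound $\int_{y_0}^{y_0+Y}|\cdot|\,dt$ by $Y$ times the $t$-pointwise supremum; inside, $g_h^{\pm}$ contributes an extra factor $\tau F$ from $|\partial f^{\pm}/\partial t|$, the prefactor $h/q\le H/q_0$ combines with this, and altogether the contribution of $R_2$ is bounded by $HF\tau Y/q_0$ times the bound already obtained for $R_1$. Summing $R_1+R_2$ therefore produces the third factor $(1+HF\tau Y/q_0)$, so the three derivative factors multiply to exactly $\Delta_H$ as defined in \eqref{Deltadef}, and reassembling the outer sums in \eqref{L1def} yields the claimed inequality. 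The main obstacle is purely bookkeeping: one must verify that the boundary contributions from each application of Abel summation are absorbed by the corresponding variation term—this is where \eqref{Hcond'} enters, by ensuring that the ``$1$'' in each factor of $\Delta_H$ is not artificially smaller than the variation term—and that the mixed $a$-$b$ variation is dominated by the product of the single-variable variations, which follows from \eqref{fcond} with $(i,j,k)=(1,1,0)$.
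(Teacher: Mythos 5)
Your overall strategy — two‑dimensional Abel summation in $(a,b)$ to strip off the slowly varying weights $\e(hf^{\pm}/q)$ and $g_h^{\pm}$, derivative bounds from \eqref{fcond}, the extra $h/q\cdot\tau F$ contribution from $R_2$ accounting for the third factor of $\Delta_H$, and the parallel treatment of the cases $2\|q'$ and $4\mid q'$ via parts (ii), (iii) of Lemma~\ref{gaussevlemma} — is exactly the paper's argument, and you correctly pinpoint that \eqref{Hcond'} is what lets the lower‑order term $(h/q)\,\alpha\beta F$ coming from $\partial^2 f^{\pm}/\partial a\,\partial b$ be absorbed by the cross term $(HF\alpha A/q_0)(HF\beta B/q_0)$ in $\Delta_H$.

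There is, however, one technical discrepancy with the statement as claimed. You bound $|R_1(e,h,q,k)|$ and $|R_2(e,h,q,k)|$ by $\Delta_H\sqrt{q'}\,\sup_{\eta,\theta}|R^{(1)}(\eta,\theta;e,h,q,k)|$ and then resum over $h,q,e,k$; this produces $\sum_{h,q,e,k}(\cdots)\sup_{\eta,\theta}|R^{(1)}(\eta,\theta;e,h,q,k)|$, with the supremum \emph{inside} the outer sums, whereas the theorem asserts $\sup_{\eta,\theta}\sum_{h,q,e,k}(\cdots)|R^{(1)}(\eta,\theta;e,h,q,k)|$, with the supremum \emph{outside}. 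Since $\sup\sum\le\sum\sup$, what you prove is a formally weaker bound than the one stated. The paper obtains the stronger form by the different order of operations it announces: after partial summation one first \emph{interchanges} the resulting $\eta$- and $\theta$-integrals (and the $t$-integral from $R_2$) with the sums in \eqref{L1def}, then bounds the derivative weights $\partial^{i+j}G/\partial\eta^i\partial\theta^j$ uniformly over $h\le H$, $q\asymp q_0$ and $(\eta,\theta)\in\mathcal{R}$ using \eqref{fcond} and \eqref{Hcond'}, and only then estimates the remaining integral by its length times the supremum of the \emph{entire} $(h,q,e,k)$-sum as a function of $(\eta,\theta)$. In practice this discrepancy is harmless here because the subsequent estimates for $|R^{(i)}|$ (via the large sieves) are uniform in $(\eta,\theta)$, so the two forms coincide in the application; but a proof of the theorem as stated should follow the interchange‑then‑sup order, not the sup‑then‑sum order.
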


We are now in a position to deduce the bound in Theorem
\ref{t:fixedabqX}  for  fixed non-zero integers $a,b,q$ such that
$q\geq 1$ and $(ab,q)=1$.  In fact there is little extra effort
required to  handle a
more general quantity.   
Let  $J=(y_0,y_0+Y]$ be an interval with $Y\ge 1$  and 
assume that $f^{\pm} :  J\rightarrow 
\mathbb{R}$ are continuously differentiable functions with $f^{+}(y)
\ge f^{-}(y)$ for all $y\in J$.  
Set $I(y):=(f^{-}(y),f^+(y)]$ and $X(y):=f^+(y)-f^-(y)$. Assume that
$
| \frac{df^{\pm}}{dy}(y) | \le T$ for all $y\in J$. 
Then we have the following result.

\begin{cor*} 
Let $H>0$ and $
\Delta_H:=1+HTY/q.$
We have 
\begin{align*}
\sum\limits_{\substack{y\in J\\ (y,q)=1}}\ \sum\limits_{\substack{x\in I(y)\\ ax+by^2\equiv 0 \bmod{q}}} 1  =
~& 
\frac{1}{q} \sum\limits_{\substack{y\in J\\ (y,q)=1}} X(y)+O\left(\frac{Y}{H}\right)\\ 
&+ 
O\left(\Delta_H L(H)\sigma_{-1/2}(q)\left(\frac{Y}{\sqrt{q}}\cdot \tau(q)+\sqrt{q}L(q)\right)\right).
\end{align*}
where $L$ and $\sigma_{-1/2}$ are as in the statement of Theorem \ref{t:fixedabqX}. 
\end{cor*}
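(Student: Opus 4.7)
The plan is to apply Theorem~\ref{generalbound} to the singleton collection $S=\{(a,b,q)\}$ with weight $c_{a,b,q}=1$, so that the sum $\mathcal{S}$ from \eqref{Sdef} coincides with the left-hand side of the corollary. First I specialise the setup of \S\ref{s:prelim-study}: the main term \eqref{mainterm} becomes $q^{-1}\sum_{y\in J,\,(y,q)=1}X(y)$; the Vaaler-type error \eqref{Eerror} contributes $O(Y/H)$; and condition \eqref{fcond} is satisfied with $\alpha=\beta=0$, $\tau=1$ and $F=T$, because $f^{\pm}$ depend only on $y$. Inserting these choices into \eqref{Deltadef} collapses $\Delta_H$ to $1+HTY/q$, matching the statement of the corollary. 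I may assume $H\ge q/T$, so that \eqref{Hcond'} holds; otherwise $\Delta_H=O(1)$ and one runs the argument with $H$ replaced by $q/T$, absorbing the discrepancy into the error terms.

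Next, I invoke Theorem~\ref{generalbound} to control $\mathcal{F}^{\pm}$. Because $S$ consists of a single triple, each of the character sums $R^{(i)}$ in \eqref{Ruvdhqk}--\eqref{Ruvdhqk3} reduces to a single summand of modulus at most $2$. Hence the work is reduced to bounding
$$
\Delta_H\sum_{h\le H}\frac{1}{h}\sum_{e\mid q}\frac{1}{\sqrt{q'}}\sum_{k=0}^{q'-1}\min\bigl\{Y/e,\,\|k/q'\|^{-1}\bigr\},
$$
where $q'=(q/e)/\gcd(he,q/e)$ is as in \eqref{primes}.

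To estimate this triple sum I split via the routine inequality $\sum_{k=0}^{q'-1}\min\{Y/e,\|k/q'\|^{-1}\}\ll Y/e+q'L(q)$. For the $q'L(q)$ contribution, since $\sqrt{q'}\le\sqrt{q/e}$, the divisor sum $\sum_{e\mid q}\sqrt{q/e}=\sqrt{q}\,\sigma_{-1/2}(q)$ combined with $\sum_{h\le H}1/h\ll L(H)$ yields a contribution of $\Delta_H L(H)\sigma_{-1/2}(q)\sqrt{q}\,L(q)$. For the $Y/e$ contribution, rewrite $(Y/e)/\sqrt{q'}=Y\sqrt{d}/\sqrt{eq}$ with $d=\gcd(he,q/e)$ and use the prime-by-prime inequality $\gcd(he,q/e)\le\gcd(e,q/e)\,\gcd(h,q/e)\le\gcd(e,q/e)\,\gcd(h,q)$. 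Splitting the $h$-sum according to $g=\gcd(h,q)$ gives $\sum_{h\le H}\sqrt{\gcd(h,q)}/h\le L(H)\,\sigma_{-1/2}(q)$, while the trivial estimate $\gcd(e,q/e)\le e$ yields $\sum_{e\mid q}\sqrt{\gcd(e,q/e)/e}\le\tau(q)$. Together these produce the $\Delta_H L(H)\sigma_{-1/2}(q)\tau(q)Y/\sqrt{q}$ piece of the error term.

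No step presents a genuine obstacle; the corollary is essentially a clean specialisation of Theorem~\ref{generalbound}. The only point demanding attention is the arithmetic control of $d=\gcd(he,q/e)$, where the indicated multiplicative inequality is what causes the simple divisor functions $\sigma_{-1/2}(q)$ and $\tau(q)$ (rather than some inseparable combination) to emerge cleanly in the final bound.
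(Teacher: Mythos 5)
Your proposal follows the paper's own route: specialise Theorem~\ref{generalbound} to the singleton set $S=\{(a,b,q)\}$ with $c_{a,b,q}=1$, bound the character sums $R^{(i)}$ trivially, and split the $k$-sum into $k=0$ and $k\neq 0$, estimating the divisor/gcd sums in the same spirit. Your gcd manipulations for the $k=0$ piece are a minor cosmetic variant of the paper's (the paper uses $(he,q/e)^{1/2}\le(h,q)^{1/2}e^{1/2}$; you decompose via $(he,q/e)\le(e,q/e)(h,q)$), and both deliver the same bound $\tau(q)\sigma_{-1/2}(q)L(H)$.

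The one point that does not go through as written is your parametrisation of \eqref{fcond}. You take $\tau=1$, $F=T$, which makes \eqref{Hcond'} read $H\ge q/T$. But the corollary must cover $T=0$ (this is exactly what is used at the end of \S\ref{s:prelim-study} to deduce Theorem~\ref{t:fixedabqX}, where $f^{\pm}$ are constant and $T=0$), and then $q/T$ is infinite and \eqref{Hcond'} is never satisfied. Your fall-back, ``otherwise $\Delta_H=O(1)$ and one runs the argument with $H$ replaced by $q/T$,'' is also not correct: if $H<q/T$ then $\Delta_H=1+HTY/q$ can be as large as $\asymp Y$, and replacing $H$ by $q/T$ gives $\Delta_{q/T}\asymp 1+Y\ge\Delta_H$, so the error term does not shrink back to the stated $\Delta_H L(H)(\cdots)$. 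The paper avoids this entirely by choosing $F=q$ and $\tau=T/q$: then $\tau F=T$ still bounds $|\partial f^{\pm}/\partial y|$, one gets the same $\Delta_H=1+HTY/q$, but \eqref{Hcond'} becomes the harmless $H\ge 1$ (and $H<1$ is trivial since then $Y/H>Y$). Adopting this choice repairs your argument without altering any other step.
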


\begin{proof} Recall \eqref{mainterm} and \eqref{Eerror}. 
We set $f^{\pm}(a,b,q,y)=f^{\pm}(y)$, $q_0=q$, $F=q$, $\tau=T/F$ and $\alpha=\beta=0$ in 
the build-up to Theorem \ref{generalbound} . 
Estimating
$R^{(i)}(\eta,\theta;d,h,q,k)$ trivially by $O(1)$, and combining this with our work so far, we readily 
obtain the asymptotic estimate
\begin{align*} 
\frac{1}{q} \sum\limits_{\substack{y\in J\\ (y,q)=1}} X(y) +O\left(\frac{Y}{H}\right)+
O\left(\Delta_H\sum\limits_{h\le H} \frac{1}{h} \sum\limits_{e\mid q} \frac{e^{1/2}(he,q/e)^{1/2}}{q^{1/2}} 
\sum\limits_{k=0}^{q-1} \min\left\{\frac{Y}{e},\frac{q}{e(he,q/e)k}\right\}\right)
\end{align*}
for the double sum in the statement.
The second $O$-term here is seen to be
$$
\ll \Delta_H\cdot \frac{Y}{q^{1/2}} \sum\limits_{h\le H} \frac{1}{h} \sum\limits_{e\mid q} \frac{(he,q/e)^{1/2}}{e^{1/2}} + 
\Delta_H(\log H+1) (\log q+1) \sigma_{-1/2}(q) \sqrt{q},
$$
where the first term comes from the contribution of $k=0$ and the second one from the contribution of $k\not=0$. 
Since $(he,q/e)^{1/2}\le (h,q)^{1/2}e^{1/2}$, we have
$$
\sum\limits_{h\le H} \frac{1}{h} \sum\limits_{e\mid q} \frac{(he,q/e)^{1/2}}{e^{1/2}} \le \tau(q) \sum\limits_{h\le H}
\frac{(h,q)^{1/2}}{h}\ll \tau(q)\sigma_{-1/2}(q)\log (H+1). 
$$
This therefore completes the proof of the corollary.
\end{proof}

For Theorem \ref{t:fixedabqX} we take $J=(0,Y]$ and $I=(0,X]$, so that 
$f^{\pm}$ are constant  and we can set
$T=0$ and $\Delta_H=1$ in the corollary.  Taking $H=q$ we therefore obtain 
\begin{align*}
M_{1,2}(X,Y;a,b,q)
&= \frac{X}{q}\sum\limits_{\substack{y\in J\\ (y,q)=1}} 1+
O\left(L(q)\sigma_{-1/2}(q)\left(\frac{Y}{\sqrt{q}}\cdot \tau(q)+\sqrt{q}L(q) \right)\right).
\end{align*}
On noting that 
$$
\sum\limits_{\substack{y\in J\\ (y,q)=1}} 1 =\frac{\varphi(q)}{q} \cdot Y +O\left(\tau(q)\right),
$$
this completes the proof of Theorem \ref{t:fixedabqX} .

\section{Proof of  Theorem \ref{mainresult} }\label{proof2}

We now place ourselves in the setting of 
Theorem \ref{mainresult}, which is concerned with estimating $\cS$ in \eqref{Sdef} when 
$S$ is given by \eqref{eq:S} for 
fixed non-zero integers  $l,m,r,s,t$
for which $l,m,t\geq 1$
and $(rs,t)=1$.   Assume furthermore that \eqref{coeffcond} holds.
Now we can set
$$
a_0:=rU^l, \quad A:=(2^l-1)rU^l, \quad b_0:=sV^m, \quad B:=(2^m-1)sV^m, \quad q_0:=tW,\quad Q:=tW
$$
in \eqref{cubo}. With 
$
\tilde{f}^{\pm}$ as in \S\ref{s:intro}, we also set
$$ 
\tilde{I}(u,v,w,y):=I(r u^l,s v^m,t w,y), \quad 
\tilde{X}(u,v,w,y):=X(r u^l,s v^m,t w,y)
$$
and 
\begin{equation} \label{Dd}
D_{u,v}=d_{u,v}+|d_{u,v}|.
\end{equation}
Next we
observe that \eqref{fcond} is equivalent to
\eqref{fcondnew}
in $(U,2U]\times(V,2V]\times (W,2W]\times J$
for $i,j,k\in \{0,1\}$ such that $ i+j+k\not=0$, 
where
$$
\rho U=\frac{l}{2^l-1}\cdot \alpha A,  \quad  \sigma
V=\frac{m}{2^m-1}\cdot \beta B.
$$
In particular \eqref{Deltadef} has the same order of magnitude as 
\eqref{Deltadefnew} under this assumption, where we recall that  $l$ and $m$ are viewed as absolute constants. 

We may now write
$$
\mathcal{S}= \mathop{\sum\limits_{U<u\le 2U}\sum\limits_{V<v\le 2V}
\sum\limits_{W<w\le 2W}}_{(rs uv,tw)=1} d_{u,v}e_w 
\sum\limits_{\substack{y_0<y\le y_0+Y\\ (y,tw)=1}} \sum\limits_{\substack{x\in \tilde{I}(u,v,w,y)\\ ru^lx+sv^my^2\equiv 0 \bmod{t w}}} 1,
$$
and 
recall the decomposition in \eqref{decomp}. 
Using \eqref{mainterm}, the main term equals
\begin{equation} \label{mains}
\mathcal{M}=\mathop{\sum\limits_{U<u\le 2U}\sum\limits_{V<v\le 2V}\sum\limits_{W<w\le 2W}}_{(rs uv,tw)=1}  
\frac{d_{u,v}e_w}{tw}\sum\limits_{\substack{y_0<y\le y_0+Y\\ (y,tw)=1}} \tilde{X}(u,v,w,y).
\end{equation}
Using \eqref{Eerror} and \eqref{coeffcond}, the error term $\mathcal{E}$ is bounded by
\begin{equation} \label{firstes}
\mathcal{E}= \frac{Y}{H} \mathop{\sum\limits_{U<u\le 2U}\sum\limits_{V<v\le 2V}
\sum\limits_{W<w\le 2W}}_{(rs uv,tw)=1} |d_{u,v}e_w| \ll \frac{UVWY}{H}.
\end{equation}

We now turn to the error term $\mathcal{F}_1^{\pm}$. 
Using \eqref{coeffcond},  Theorem \ref{generalbound} and \eqref{Dd}, 
we see that 
$$
\mathcal{F}_1^{\pm}  
\ll
\Delta_H \sup\limits_{\substack{U\le \eta\le 2U\\ V\le \theta\le 2V}}\
\sum\limits_{h\le H} \ \sum\limits_{\substack{W<w\le 2W\\ (2rs,tw)=1}}
\ \sum\limits_{e\mid  tw}  
\frac{1}{h\sqrt{q'}}  \sum\limits_{k=0}^{q'-1}
\min\left\{Y/e,\|k/q'\|^{-1}\right\} |R(\eta,\theta;h',q',k)|.
$$
An application of \eqref{primes} therefore yields
\begin{equation} \label{afternotd}
\begin{split}
\mathcal{F}_1^{\pm}  
 \ll~& \frac{\Delta_H}{(tW)^{1/2}} \sup\limits_{\substack{U\le \eta\le
    2U\\ V\le \theta\le 2V}}\sum\limits_d \sum\limits_e  
\sum\limits_{\substack{h\le H\\ d\mid  he}} \frac{d^{1/2}e^{1/2}}{h}\\
&\times  
\sum\limits_{\substack{W<w\le 2W\\ (2rs,tw)=1\\ de\mid tw\\ (he,tw/e)=d}}  
\sum\limits_{k=0}^{q'-1} \min\left\{Y/e,\|k/q'\|^{-1}\right\} |R(\eta,\theta;h',q',k)|,
\end{split}
\end{equation}
where
\begin{equation} \label{qwnew}
d=(he,tw/e), \quad q'=\frac{tw}{de}, \quad h'=\frac{he}{d}
\end{equation}
and
$$
R(\eta,\theta;h',q',k)= 
\mathop{\sum\limits_{U<u\le \eta}\ \sum\limits_{V<v\le \theta}}_{(uv,tw)=1} 
D_{u,v} \left(\frac{u^lv^m}{q'}\right) \e\left(-\frac{\overline{4sv^mh'}\cdot ru^lk^2}{q'}\right).
$$
One derives similar bounds for $\mathcal{F}_2^{\pm}$ and $\mathcal{F}_4^{\pm}$ using \eqref{Ruvdhqk2} and \eqref{Ruvdhqk3} instead of \eqref{Ruvdhqk}. 
It will suffice to estimate $\mathcal{F}_1^{\pm}$ since the treatments of $\mathcal{F}_2^{\pm}$ and $\mathcal{F}_4^{\pm}$ will essentially be the same. We note 
that the right-hand side of \eqref{afternotd} is empty if $t$ is even, so we may assume that $t$ is odd.

In the next sections, we shall treat the contributions of $k=0$ and $k\not=0$ to the right-hand side of $\eqref{afternotd}$ separately. To
this end, we define
\begin{equation} \label{F0def}
\mathcal{K}_0:=\frac{\Delta_H Y}{(tW)^{1/2}} \sup\limits_{\substack{U\le \eta\le 2U\\ V\le \theta\le 2V}}\ \sum\limits_{d} \sum\limits_e 
\sum\limits_{\substack{h\le H\\ d\mid he}} \frac{d^{1/2}}{e^{1/2}h} \sum\limits_{\substack{W<w\le 2W\\ (2rs,w)=1\\ de\mid tw}}
\left|\mathop{\sum\limits_{U<u\le \eta}\ \sum\limits_{V<v\le \theta}}_{(uv,tw)=1} 
D_{u,v} \left(\frac{u^lv^m}{q'}\right) \right|
\end{equation}
and 
\begin{equation} \label{F1simpler}
\mathcal{K}_1:= \Delta_H (tW)^{1/2}\sup\limits_{\substack{U\le \eta\le 2U\\ V\le \theta\le 2V}} \sum\limits_{d} \sum\limits_{e} \sum\limits_{\substack{h\le H\\ d\mid eh}}  
\frac{1}{d^{1/2}e^{1/2}h} \sum\limits_{\substack{W<w\le 2W\\ (2rs,w)=1\\ de\mid tw\\ (he,tw/e)=d}} 
\sum\limits_{k=1}^{[q'/2]} \frac{1}{k} |R(\eta,\theta;h',q',k)|.
\end{equation}
Note that we have dropped the condition $ (he,tw/e)=d$ in
$\mathcal{K}_0$ but kept it in $\mathcal{K}_1$ since 
$R(\eta,\theta;h',q',k)$ is not well-defined if $(h',q')>1$. 

As a rule of thumb we expect $\mathcal{K}_0$ to dominate if $Y$ is large compared to $q_0$ and   $\mathcal{K}_1$ to dominate otherwise. Therefore, one would like to obtain
non-trivial bounds for $\mathcal{K}_0$ if $Y$ is large and non-trivial
bounds for $\mathcal{K}_1$ 
if $Y$ is small. Here we are mainly interested in the case of large $Y$.

\subsection{The contribution of $k=0$}

We aim to exploit cancellations coming from the Jacobi symbol. Our result will clearly 
depend on the parities
of the exponents $l$ and $m$.  We will establish the following bound.

\begin{prop} \label{F0final} 
We have 
$$
\mathcal{K}_0 \ll \frac{\Delta_H Y}{(tW)^{1/2}} \cdot (HtUVW)^{\varepsilon}\left(UVW^{1/2}+
U^{1-\{l/2\}}V^{1-\{m/2\}}W\right).
$$
\end{prop}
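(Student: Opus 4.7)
The plan is to bound $T(w) = \sum_{u,v} D_{u,v} \left(\frac{u^l v^m}{q'}\right)$ for each $w$ (where $q' = tw/(de)$) by splitting according to the parities of $l$ and $m$, and then to execute the sum over $(d, e, h, w)$. Since $\sum_w |T(w)| = \sup_{|\epsilon_w| = 1} \big|\sum_w \epsilon_w T(w)\big|$, we may replace $|T(w)|$ by $\epsilon_w T(w)$ and swap the $w$-sum inside the $(u,v)$-sum.

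\textbf{Case 1: both $l$ and $m$ even.} Here $\left(\frac{u^l v^m}{q'}\right)$ equals $1$ when $(uv, q') = 1$ and $0$ otherwise, so $|T(w)| \leq 4UV$ trivially. The key arithmetic estimate to derive is
\[
\sum_{\substack{d, e, h, w \\ d \mid he,\ de \mid tw \\ h \leq H,\ W < w \leq 2W}} \frac{d^{1/2}}{e^{1/2} h} \ll W \cdot (HtW)^{\varepsilon},
\]
which I would obtain by fixing $w$ first, executing $\sum_{h \leq H,\, d \mid he} 1/h \leq ((d,e)/d) \log(H+1)$, noting $(d,e)/(d^{1/2} e^{1/2}) \leq 1$, and then $\sum_{d, e \,:\, de \mid tw} 1 = \sum_{n \mid tw} \tau(n) \ll (tw)^{\varepsilon}$ before summing over $w \in (W, 2W]$. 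Multiplied by $\Delta_H Y UV/(tW)^{1/2}$, this gives exactly the $U^{1-\{l/2\}}V^{1-\{m/2\}} W = UVW$ contribution in the stated bound.

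\textbf{Case 2: at least one of $l, m$ is odd.} Since $(u^{2k}/q') \in \{0, 1\}$, the Jacobi symbol reduces to $(u^{l \bmod 2} v^{m \bmod 2}/q')$, providing genuine cancellation in $q'$. I would apply Heath-Brown's large sieve (Lemma~\ref{lsreal}) to the resulting double sum with $q'$ as the modulus variable, using $q' \leq 2tW/(de)$. When both $l$ and $m$ are odd, set $n = uv$ and collect $c_n = \sum_{uv=n,\, U<u\leq\eta,\, V<v\leq\theta} D_{u,v}$ (bounded via $\tau(n) \ll n^{\varepsilon}$); Lemma~\ref{lsreal} with $M = 2tW/(de)$ and $N = 4UV$ yields
\[
\Big|\sum_w \epsilon_w T(w)\Big| \ll \left(\frac{tUVW}{de}\right)^{\varepsilon} \left( \frac{tW(UV)^{1/2}}{de} + \left(\frac{tW}{de}\right)^{1/2} UV \right).
\]
For the mixed parities only the odd-exponent variable enters the $n$-sum, and the trivial bound $V$ (resp.\ $U$) on the other variable's sum is absorbed as a scalar prefactor. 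Summing the two resulting Heath--Brown terms against $d^{1/2}/(e^{1/2} h)$ over $(d, e, h)$ with $d \mid he$, $h \leq H$ — using the convergence of $\sum_{d, e} (d,e)/(d^{3/2} e^{3/2})$ for the first term and the bound $\sum_{d, e, h,\, d \mid he} \tau(he)/(eh) \ll (HtW)^{\varepsilon}$ for the second — produces, after multiplying by $\Delta_H Y/(tW)^{1/2}$, the $U^{1-\{l/2\}}V^{1-\{m/2\}}W$ term (from $MN^{1/2}$) and the $UVW^{1/2}$ term (from $M^{1/2} N$), matching the proposition.

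The principal obstacle is the delicate interplay between the Heath--Brown modulus $q' = tw/(de)$ and the arithmetic constraints $d \mid he$, $de \mid tw$: naive estimates of the resulting $(d, e, h, w)$-sums leak spurious factors of $H$, $W$, or $t$. The identity $\sum_{h \leq H,\, d \mid he} 1/h \leq ((d,e)/d)\log(H+1)$ together with the convergence of $\sum_{d,e} (d,e)/(d^{3/2} e^{3/2}) \leq \zeta(3/2)^2$ are the devices that prevent such losses. Since $t$ is fixed once and for all, any residual $t$-factors are absorbed into implicit constants, consistent with the $t^{\varepsilon}$ in the claimed bound.
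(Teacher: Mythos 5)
Your decomposition into cases by the parities of $l$ and $m$, and the treatment of the case where both are even via the trivial bound $|T(w)|\le 4UV$ together with the arithmetic estimate $\sum_{d,e,h,w}d^{1/2}/(e^{1/2}h)\ll W(HtW)^\varepsilon$, is correct and is essentially what the paper does there.

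However, in the cases with at least one odd exponent there is a genuine gap. You apply Lemma~\ref{lsreal} with the modulus variable taken to be $q'=tw/(de)$, so that $M=2tW/(de)$. This throws away the fact that $q'$ only runs over the thin arithmetic progression $q'=(t/(de,t))\,w'$, $w'\in(W/j,2W/j]$ with $j=de/(de,t)$, and consequently your two Heath--Brown terms, after multiplying by $\Delta_H Y d^{1/2}/((tW)^{1/2}e^{1/2}h)$ and summing over $d,e,h$, come out as
$$
\frac{\Delta_H Y}{(tW)^{1/2}}\cdot t\,U^{1/2}V^{1/2}W\cdot(HtUVW)^{\varepsilon}
\qquad\text{and}\qquad
\frac{\Delta_H Y}{(tW)^{1/2}}\cdot t^{1/2}\,UVW^{1/2}\cdot(HtUVW)^{\varepsilon},
$$
which exceed the claimed bound by factors of $t$ and $t^{1/2}$ respectively. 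Your closing remark that ``$t$ is fixed once and for all'' and such factors can be absorbed is not consistent with the paper: the setting of Theorem~\ref{mainresult} explicitly declares $r,s,t$ to be parameters whose dependence is tracked ($l,m$ are the only quantities fixed once and for all), and the proposition allows only $t^{\varepsilon}$. The paper avoids the loss by first factoring the Jacobi symbol, $\bigl(\frac{uv}{q'}\bigr)=\bigl(\frac{uv}{tde}\bigr)\bigl(\frac{uv}{w}\bigr)$ (valid because $tde\cdot w=q'(de)^2$ and $(uv,de)=1$), absorbing $\bigl(\frac{uv}{tde}\bigr)$ into the coefficient $\beta_z$, and then substituting $w=jw'$ so that the large-sieve modulus is $w'$ with $M=2W/j$; the residual $j^{-1/2}$ then feeds into the $(d,e,h)$-sum as $(de,t)^{1/2}/(eh)$, which is $\ll(HtW)^{\varepsilon}$. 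Your argument can be repaired by inserting this factorization before invoking Lemma~\ref{lsreal}, but as written the $t$-uniformity fails.
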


We will achieve this result by considering four different cases.
Suppose first that $l$ and  $m$ are odd. In this case, 
we shall treat the term $\mathcal{K}_0$ using Heath-Brown's large
sieve for real characters.  
First, we recall our assumption that $t$ is odd and note that $de$ is
also necessarily odd by our summation  
conditions  $(w,2)=1$ and $de\mid tw$. Now, using the oddness of the exponents $l$ and $m$,  
the multiplicativity of the Jacobi symbol and \eqref{qwnew}, we observe that
$$
\left(\frac{u^lv^m}{q'}\right)=\left(\frac{uv}{tde}\right)\left(\frac{uv}{w}\right) 
$$
since $(uv,tw)=1$. Furthermore we write
$$
\beta_{z}:=\left(\frac{z}{tde}\right)\sum\limits_{\substack{U<u\le
    \eta\\ V<v\le \theta\\ uv=z}} D_{u,v}. 
$$
Then it follows that
$$
\mathop{\sum\limits_{U<u\le \eta}\ \sum\limits_{V<v\le \theta}}_{(uv,tw)=1} D_{u,v} \left(\frac{u^lv^m}{q'}\right) = 
\sum\limits_{UV<z\le 4UV} \beta_{z}\left(\frac{z}{w}\right),
$$
where we note that the coprimality condition $(uv,tw)=1$ is implied by
the Jacobi symbols. We further note that
$\beta_{z}=O( z^{\varepsilon})$
by \eqref{coeffcond} and \eqref{Dd}.
Next we write
$$
\left|\sum\limits_{UV<z\le 4UV} \beta_{z}\left(\frac{z}{w}\right)\right|=
\alpha_{w}  \sum\limits_{UV<z\le 4UV} \beta_{z}\left(\frac{z}{w}\right),
$$
where $\alpha_{w}$ is a suitable complex number with
$|\alpha_{w}|=1$.
The inner triple sum in
\eqref{F0def} now takes the form 
$$
\sum\limits_{\substack{W<w\le 2W\\ (2rs,w)=1\\ de\mid tw}}
\left|\mathop{\sum\limits_{U<u\le \eta}\ \sum\limits_{V<v\le \theta}}_{(uv,tw)=1}
D_{u,v} \left(\frac{u^lv^m}{q'}\right) \right|= \sum\limits_{\substack{W<w\le 2W\\ (2rs,w)=1\\ de\mid tw}}\alpha_{w}  \sum\limits_{UV<z\le 4UV} \beta_{z}\left(\frac{z}{w}\right).
$$
We observe that
$tw \equiv 0 \bmod{de} $ if and only if $w \equiv 0 \bmod{de/(de,t)}$.
Hence 
$$
\sum\limits_{\substack{W<w\le 2W\\ (2rs,w)=1\\ de\mid tw}} 
\alpha_{w}  \sum\limits_{UV<z\le 4UV} \beta_{z}\left(\frac{z}{w}\right)=
\sum\limits_{\substack{W/j<w\le 2W/j\\ (2rs,jw)=1}} 
\tilde{\alpha}_{w}  \sum\limits_{UV<z\le 4UV} \tilde{\beta}_{z}\left(\frac{z}{w}\right),
$$
where 
$$
j=\frac{de}{(de,t)}, \quad \tilde{\alpha}_{w}=\alpha_{jw}, \quad \tilde{\beta}_{z}=\beta_{z}
\cdot \left(\frac{z}{j}\right).
$$

Recalling that $\beta_{z}=O( z^{\varepsilon})$
and applying  Lemma \ref{lsreal}, we deduce that 
$$
\sum\limits_{\substack{W/j<w\le 2W/j\\ (2rs,jw)=1}} 
\tilde{\alpha}_{w}  \sum\limits_{UV<z\le 4UV} \tilde{\beta}_{z}\left(\frac{z}{w}\right)\ll (UVW)^{\varepsilon}
\left(\frac{UVW^{1/2}}{j^{1/2}}+\frac{U^{1/2}V^{1/2}W}{j}\right).
$$
Combining our work in \eqref{F0def},  
and noting that $de\mid tw$, we obtain the preliminary bound
$$
\mathcal{K}_0 \ll  \frac{\Delta_H Y}{(tW)^{1/2}} \cdot (UVWH)^{\varepsilon}\left(UVW^{1/2}+U^{1/2}V^{1/2}W\right) 
\sum\limits_{\substack{d,e\\ de\le 2tW}} 
\sum\limits_{\substack{h\le H\\ d\mid he}} \frac{d^{1/2}}{e^{1/2}hj^{1/2}}.
$$
But
\begin{align*} 
\sum\limits_{\substack{d,e\\ de\le 2tW}}  \sum\limits_{\substack{h\le
    H\\ d\mid he}} \frac{d^{1/2}}{e^{1/2}hj^{1/2}} 
=
\sum\limits_{\substack{d,e\\ de\le 2tW}}  \sum\limits_{\substack{h\le H\\ d\mid he}} \frac{(de,t)^{1/2}}{eh} 
&\ll (HtW)^{\varepsilon}
\sum\limits_{e\le 2tW}  \sum\limits_{h\le H} \frac{(he^2,t)^{1/2}}{eh} \\ 
&\le (HtW)^{\varepsilon}\sum\limits_{e\le 2tW}  \frac{(e,t)}{e}
    \sum\limits_{h\le H} \frac{(h,t)^{1/2}}{h} \\ 
&\ll (HtW)^{2\varepsilon}.
\end{align*}
This therefore gives
\begin{equation} \label{finalF0bound}
\mathcal{K}_0 \ll \frac{\Delta_H Y}{(tW)^{1/2}} \cdot (HtUVW)^{\varepsilon}\left(UVW^{1/2}+U^{1/2}V^{1/2}W\right),
\end{equation}
which is satisfactory for Proposition \ref{F0final}.

Next suppose that $m$ is odd and $l$ is even. Then we have 
$$
\left(\frac{u^lv^m}{q'}\right) = \chi_0(u)\left(\frac{v}{q'}\right),
$$
where $\chi_0$ is the principal character modulo $q'$. Hence, it is not possible to exploit the summation over $u$.
Therefore, we sum over $u$ trivially and estimate the term 
$$
\sum\limits_{\substack{\substack{W<w\le 2W\\ (2rs,w)=1\\ de\mid tw}}} 
\left|\sum\limits_{\substack{V<v\le \theta\\ (v,tw)=1}} 
D_{u,v} \left(\frac{v}{q'}\right) \right|
$$
using Lemma  \ref{lsreal}, just as above. In this way we arrive at the same bound for
$\mathcal{K}_0$, where the term $U^{1/2}$ in \eqref{finalF0bound} is replaced by $U$, as required.
If $l$ is odd and $m$ is even then the situation is the same, with the roles
of $u$ and $v$ being interchanged. Thus, in this case, the term
$V^{1/2}$ in \eqref{finalF0bound} needs to be replaced 
by $V$.

Finally suppose that $l$ and $m$ are both even.
$$
\left(\frac{u^lv^m}{q'}\right) = \chi_0(uv),
$$
where $\chi_0$ is the principal character modulo $q'$. Hence, in this case we have no cancellations at all in $\mathcal{K}_0$, and the
only possibility is to estimate trivially. Here the term
$UVW^{1/2}+U^{1/2}V^{1/2}W$ in \eqref{finalF0bound} 
needs to be replaced by $UVW$.

This completes the proof of Proposition \ref{F0final} . 
We note from \eqref{Ruvdhqk3} that when dealing with the contribution corresponding to $\mathcal{K}_0$ in $\mathcal{F}_4^{\pm}$, the roles of $ab$ and $q$ in the Jacobi symbol are flipped. 
The oddness condition on $m=ab$ in Lemma \ref{lsreal} will be satisfied since $(ab,q)$=1 and $4\mid q$, whence $(ab,2)=1$ in this case.

\subsection{The contribution of $k\not=0$}


We  first estimate the contribution $\mathcal{K}_1$ of $k\not=0$
trivially, by bounding all coefficients $D_{u,v}$ and $e_w$ and the
characters occurring in $R(\eta,\theta;h',q',k)$ by $O(1)$.  
Rearranging summations and dropping several summation conditions, we obtain
$$
\mathcal{K}_1\ll \Delta_H (tW)^{1/2}UV \sum\limits_{h\le H} \frac{1}{h} 
\sum\limits_{W<w\le 2W} \sum\limits_{k\le tw} \frac{1}{k} \sum\limits_{\substack{d,e\\ de\mid tw}} \frac{1}{d^{1/2}e^{1/2}},
$$
which therefore implies the following bound.

\begin{prop} \label{F1final} 
We have 
$
\mathcal{K}_1\ll \Delta_H (tW)^{1/2}UVW(HtW)^{\varepsilon}.
$
\end{prop}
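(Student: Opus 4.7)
The plan is to start from the intermediate inequality
$$
\mathcal{K}_1 \ll \Delta_H (tW)^{1/2} UV \sum_{h\le H} \frac{1}{h} \sum_{W<w\le 2W} \sum_{k\le tw} \frac{1}{k} \sum_{\substack{d,e\\ de\mid tw}} \frac{1}{d^{1/2}e^{1/2}},
$$
displayed immediately before the statement, and to bound each of the four sums independently. The sums over $h$ and over $k$ are harmonic, yielding $\sum_{h\le H} h^{-1} \ll L(H)$ and $\sum_{k\le tw} k^{-1} \ll L(tw)$, both of which are absorbed into an eventual $(HtW)^\varepsilon$. The outer sum over $w$ is trivially of size at most $W$.

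The only step requiring a moment of thought is the innermost divisor sum. Since $d^{1/2}e^{1/2}=(de)^{1/2}$, setting $f=de$ and grouping gives
$$
\sum_{\substack{d,e\\ de\mid tw}} \frac{1}{(de)^{1/2}} = \sum_{f\mid tw} \frac{\tau(f)}{f^{1/2}} \ll \tau(tw)^{2} \ll (tW)^{\varepsilon},
$$
where we have used the standard estimate $\tau(n)\ll n^{\varepsilon}$. Substituting all four estimates back and collecting the various $\varepsilon$-losses into a single $(HtW)^{\varepsilon}$ factor yields
$$
\mathcal{K}_1 \ll \Delta_H (tW)^{1/2} UVW (HtW)^{\varepsilon},
$$
as required. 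Since the proof is purely a trivial bookkeeping exercise once the intermediate inequality is in place, there is no real obstacle to overcome; the only subtlety worth flagging is the use of the $\tau$-bound to handle the double sum over $d$ and $e$.
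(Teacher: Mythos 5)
Your proof is correct and follows exactly the route the paper intends: the paper simply displays the intermediate inequality and states that it ``therefore implies'' the proposition, leaving the four elementary sum estimates (harmonic sums over $h$ and $k$, trivial sum over $w$, and the divisor-type sum over $d,e$) as a routine bookkeeping step, which you have carried out correctly.
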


A non-trivial saving can be obtained if $UV$ is large compared to
 $q_0$ and $d_{u,v}$ factorises in the form 
 \eqref{Duvfact}, which we now assume.
By \eqref{Dd} we have
$$
R(\eta,\theta;h',q',k)=R_1(\eta,\theta;h',q',k)+R_2(\eta,\theta;h',q',k),
$$
where 
\begin{align*}
R_1(\eta,\theta;h',q',k)
&:=\mathop{\sum\limits_{U<u\le \eta}\ \sum\limits_{V<v\le \theta}}_{(uv,tw)=1} 
d_u'\tilde{d}_v \left(\frac{u^lv^m}{q'}\right) \e\left(-\frac{\overline{4sv^mh'}\cdot ru^lk^2}{q'}\right),\\
R_2(\eta,\theta;h',q',k)&:=\mathop{\sum\limits_{U<u\le \eta}\ \sum\limits_{V<v\le \theta}}_{(uv,tw)=1} 
|d_u'| \cdot |\tilde{d}_v| \left(\frac{u^lv^m}{q'}\right) \e\left(-\frac{\overline{4sv^mh'}\cdot ru^lk^2}{q'}\right).
\end{align*}
We focus here on bounding $R_1$, the estimation of $R_2$ being similar.
 
We begin by  writing
$$
\e\left(-\frac{\overline{4sv^mh'}\cdot ru^lk^2}{q'}\right) 
= \e\left(-\frac{\overline{4sv^mh'}\cdot ru^lk'}{q''}\right),
$$
where 
\begin{equation} \label{copri}
k':=\frac{k^2}{(q',k^2)}, \quad  q''=\frac{q'}{(q',k^2)}. 
\end{equation}
Now we write the additive character in terms of multiplicative characters via
\begin{align*}
\e\left(-\frac{\overline{4sv^mh}\cdot ru^lk'}{q''}\right)= & \frac{1}{\varphi(q'')} \sum\limits_{\chi\bmod{q''}} \overline{\chi}(-\overline{4sv^mh}\cdot ru^lk')\tau(\chi)\\ 
= & \frac{1}{\varphi(q'')} \sum\limits_{\chi\bmod{q''}} \chi(-4sh\overline{rk'})  \overline{\chi}^l(u)\chi^m(v)\tau(\chi).
\end{align*}
It follows that
$$
R_1(\eta,\theta; h',q',k)=  \frac{1}{\varphi(q'')} \sum\limits_{\chi\bmod{q''}} \chi(-4sh\overline{rk'}) \tau(\chi)
\sum\limits_{\substack{U<u\le \eta\\ (u,tw)=1}} d_u'' \overline{\chi}^l(u) \sum\limits_{\substack{V<v\le \theta\\ (v,tw)=1}} 
\tilde{\tilde{d}}_v  \chi^m(v),
$$
where
$
d_u'':=d_u'(\frac{u}{q'})^l$ and $ \tilde{\tilde{d}}_v:=\tilde{d}_{v}(\frac{v}{q'})^m.
$
Note that for every fixed $n\in \mathbb{N}$ and every character $\chi_1$ mod $q''$ there are at most $O\left({q''}^{\varepsilon}\right)$ characters $\chi$ mod $q''$ with 
$\chi_1=\chi^n$. Therefore, using Cauchy--Schwarz and the well-known
bounds 
$|\tau(\chi)|\le \sqrt{q''}$ and $\varphi(q'')\gg q''^{1-\varepsilon}$, we deduce that
$$
|R_1(\eta,\theta;h',q',k)|\ll {q''}^{-1/2+\varepsilon}
\left(
\hspace{-0.1cm}
\sum\limits_{\chi\bmod{q''}} \left|
    \sum\limits_{\substack{U<u\le \eta\\ (u,tw)=1}} d_u''
    \overline{\chi}(u)\right|^2\right)^{1/2} 
\hspace{-0.2cm}
\left(
\hspace{-0.1cm}
\sum\limits_{\chi\bmod{q''}} \left|\sum\limits_{\substack{V<v\le \theta\\ (v,tw)=1}} 
\tilde{\tilde{d}}_v  \chi(v)\right|^2\right)^{1/2}.
$$
Now using the large sieve for fixed modulus (see Iwaniec and Kowalski
\cite[page 179]{IwKo}, for example), together with $|d_u''|$,
$|\tilde{\tilde{d}}_{v}|\le 1$,
we deduce that 
\begin{align*}
R_1(\eta,\theta;h',q',k)
\ll & {q''}^{-1/2+\varepsilon}(q'+U)^{1/2}(q'+V)^{1/2}(UV)^{1/2}. 
\end{align*}
The same estimate holds for $R_2(\eta,\theta;h',q',k)$ on redefining $d_u''$ and $\tilde{\tilde{d}}_v$ accordingly.
Hence, using \eqref{qwnew} and \eqref{copri}, it follows that
\begin{align*}
\sum\limits_{k=1}^{[q'/2]} \frac{1}{k} |R(\eta,\theta;h',q',k)| & \ll q'^{-1/2+\varepsilon}(q'+U)^{1/2}(q'+V)^{1/2}(UV)^{1/2} \sum\limits_{k=1}^{[q'/2]} \frac{(q',k^2)^{1/2}}{k}\\
& \ll d^{1/2}e^{1/2} (tW)^{-1/2+2\varepsilon}(tW+U)^{1/2}(tW+V)^{1/2}(UV)^{1/2},
\end{align*}
where we have estimated the $k$-sum by $O\left({q'}^{\varepsilon}\right)$.
Plugging the last line  into \eqref{F1simpler}, rearranging the summations and dropping several summation conditions, we obtain
$$
\mathcal{K}_1\ll \Delta_H  (tW)^{\varepsilon}(tW+U)^{1/2}(tW+V)^{1/2}(UV)^{1/2} \sum\limits_{h\le H} \frac{1}{h} 
\sum\limits_{W<w\le 2W} \sum\limits_{\substack{d,e\\ de\mid tw}} 1.
$$
This yields the following result, 
which improves Proposition \ref{F1final} if $UV$ is larger than $q_0=tW$.

\begin{prop} \label{anotherF1bound} We have 
$\mathcal{K}_1\ll \Delta_H (tW+U)^{1/2}(tW+V)^{1/2}(UV)^{1/2}W(HtW)^{\varepsilon},$
if \eqref{Duvfact} holds. 
\end{prop}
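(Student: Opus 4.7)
The plan is to exploit the assumed factorisation \eqref{Duvfact} so that the double sum $R(\eta,\theta;h',q',k)$ decouples into a $u$-sum and a $v$-sum after a harmonic expansion, and then to profit from the orthogonality of characters. First I would write $D_{u,v}=d_u'\tilde d_v+|d_u'\tilde d_v|$ and correspondingly split $R=R_1+R_2$; both pieces will be bounded identically, so I concentrate on $R_1$.

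The key manoeuvre is to open up the additive character. Dividing out the common factor $(q',k^2)$, the character reduces to one modulo $q''=q'/(q',k^2)$, and the standard Fourier inversion $e(n/q'')=\varphi(q'')^{-1}\sum_{\chi\bmod q''}\overline\chi(n)\tau(\chi)$ turns the modular factor into $\chi(-4sh'\overline{rk'})$ while the dependence on the summation variables separates cleanly as $\overline\chi^l(u)\chi^m(v)$. Now Cauchy--Schwarz in $\chi$ and the large sieve for a single modulus $q''$ applied to each of the $u$- and $v$-sums, combined with the classical bounds $|\tau(\chi)|\le {q''}^{1/2}$ and $\varphi(q'')\gg {q''}^{1-\varepsilon}$, should yield
$$
|R(\eta,\theta;h',q',k)|\ll {q''}^{-1/2+\varepsilon}(q'+U)^{1/2}(q'+V)^{1/2}(UV)^{1/2}.
$$

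The main obstacle I foresee is the $k$-summation $\sum_{1\le k\le q'/2}(q',k^2)^{1/2}/k$, since a priori $(q',k^2)$ can be as large as $q'$. Stratifying by $\delta=(q',k^2)$ over divisors of $q'$ and applying a standard divisor bound should reduce this sum to $O({q'}^{\varepsilon})$, at which point one obtains
$$
\sum_{k=1}^{[q'/2]}\frac{|R(\eta,\theta;h',q',k)|}{k}\ll d^{1/2}e^{1/2}(tW)^{-1/2+\varepsilon}(tW+U)^{1/2}(tW+V)^{1/2}(UV)^{1/2},
$$
after using $q'=tw/(de)$. Substituting this into \eqref{F1simpler}, the prefactor $(tW)^{1/2}/(d^{1/2}e^{1/2})$ absorbs the $d^{1/2}e^{1/2}(tW)^{-1/2}$, and the outer sums over $h\le H$, $w\sim W$ and divisor pairs $(d,e)$ with $de\mid tw$ are bounded trivially by $O(W(HtW)^{\varepsilon})$, yielding the claimed bound. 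This refines Proposition \ref{F1final} precisely when $UV\ge tW$, exactly the regime where \eqref{Duvfact} was expected to be useful.
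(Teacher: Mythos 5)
Your proposal follows the paper's argument step by step: the decomposition $D_{u,v}=d_u'\tilde{d}_v+|d_u'\tilde{d}_v|$, the reduction to modulus $q''=q'/(q',k^2)$, the expansion via Dirichlet characters, Cauchy--Schwarz plus the fixed-modulus large sieve, and the $O({q'}^{\varepsilon})$ bound on $\sum_k(q',k^2)^{1/2}/k$. The only step you elide is why the powers $\overline{\chi}^l$ and $\chi^m$ still permit the large sieve; the paper inserts at this point the observation that for fixed $n$ the map $\chi\mapsto\chi^n$ is $O({q''}^{\varepsilon})$-to-one on characters mod $q''$, which supplies exactly the $\varepsilon$-loss you anticipate.
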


\subsection{Conclusion}\label{s:final}

Now we are ready to prove our final asymptotic estimate for $\mathcal{S}$. First, combining Propositions \ref{F0final}, \ref{F1final} and \ref{anotherF1bound},
we get
$$
\mathcal{F}_1^{\pm}\ll 
\mathcal{K}_0+\mathcal{K}_1\ll \mathcal{T},
$$
where  
$\mathcal{T}$ is as in the statement of Theorem \ref{mainresult}.
The same bound holds for $\mathcal{F}_2^{\pm}$ and
$\mathcal{F}_4^{\pm}$. Hence, using Theorem \ref{generalbound}, 
we obtain 
$\mathcal{F}^{\pm}\ll \mathcal{T}.$
Combining this with \eqref{decomp},  \eqref{mains} and \eqref{firstes}, we arrive at the statement of 
Theorem \ref{mainresult}.


We end this section by discussing 
conditions under which we may expect the main term to dominate the error term in Theorem 
\ref{mainresult}. In many applications, the length $\tilde{X}(u,v,w,y)$ of the $x$-interval will be of size 
$
\tilde{X}(u,v,w,y)\asymp X\le q_0=tW,
$ 
for some fixed $X>0$, and the parameters in \eqref{fcondnew} will satisfy
\begin{equation} \label{albeta}
F\asymp X, \quad \rho\asymp U^{-1}, \quad \sigma\asymp V^{-1}, \quad \tau \asymp Y^{-1}.
\end{equation}
Moreover, in generic applications $U$ and $V$ will be shorter than the
modulus, and so we further suppose that $U\leq tW$ and $V\leq tW$. 

If there is not much cancellation in the sums over the coefficients,
then the expected size of the main term in \eqref{mains} is 
$$
\mathcal{M} \asymp \frac{UVWXY}{q_0}.
$$
For the first $O$-term on the right-hand side of the asymptotic
formula in Theorem \ref{mainresult} to be dominated by this  
we need $H$ just slightly larger than $q_0/X$. The choice 
$$
H=\frac{q_0^{1+\varepsilon}}{X}
$$
would be satisfactory. 
Then
$\Delta_H\ll q_0^{\varepsilon}$,  by
\eqref{Deltadefnew} and \eqref{albeta}.
Now, for $\mathcal{T}$ to be smaller than $\mathcal{M}$, we need
$$
q_0^{1+\varepsilon}\le \min\left\{U^{2\{l/2\}}V^{2\{m/2\}}X^2,Z\right\} \quad \mbox{and} \quad q_0^{\varepsilon}t^{1/2} \le X,
$$
where 
$$
Z:=\begin{cases}  (UV)^{1/4}(XY)^{1/2}, & \mbox{if \eqref{Duvfact} holds 
and $UV\geqq tW$,}\\ 
(XY)^{2/3},
 & \mbox{in general.} \end{cases}
$$


\begin{thebibliography}{99}


\bibitem{baier-browning}
S. Baier and T.D. Browning,
Inhomogeneous cubic congruences and rational points on del Pezzo surfaces. {\em Submitted}, 2010.

\bibitem{BGS}
J. Bourgain, A. Gamburd and P. Sarnak, 
Affine linear sieve, expanders, and sum-product. 
{\em Invent.\ Math.} {\bf 179} (2010), 559--644. 

\bibitem{browningderenthal} 
T.D. Browning and U. Derenthal, Manin's conjecture for a cubic surface with 
$\mathbf{D}_5$ singularity. 
{\em Int. Math. Res. Not.} {\bf 14} (2009), 2620--2647.

\bibitem{D}
U. Derenthal, Counting integral points on universal torsors. {\em Int. Math. Res. Not.}
{\bf 14} (2009), 2648Ð-2699.

\bibitem{D-T}
U. Derenthal and Y. Tschinkel. Universal torsors over del Pezzo surfaces and rational points.  {\em Equidistribution in number theory},  169--196, 
 NATO Sci. Ser. II {\bf 237}, Springer, 2007.

\bibitem{DH}
H.G. Diamond and H. Halberstam, 
{\em 
A higher-dimensional sieve method}. 
Cambridge Tracts in Math. {\bf 177}, CUP, 2008. 


\bibitem{estermann} 
T. Estermann, 
A new application of the Hardy--Littlewood--Kloosterman method. {\em Proc. London Math. Soc.} {\bf 12}  (1962), 425--444.

\bibitem{f-m-t} J. Franke, Y.I. Manin and Y. Tschinkel, 
Rational points of bounded height on Fano varieties. {\em
  Invent. Math.}  {\bf 95} (1989), 421--435. 

\bibitem{grahamkolesnik} 
S.W. Graham and G. Kolesnik, 
{\em Van der Corput's method of exponential sums}.  London Math. Soc. Lecture Note Series
{\bf  126}, CUP, 1991.


\bibitem{hassett}
B. Hassett, Rational surfaces over nonclosed fields.
{\em Clay Mathematics Institute summer school in arithmetic geometry, G\"ottingen, July 17--August 11, 2006},  155--209,   Clay Mathematics Proceedings {\bf 8}, AMS, 2009.


\bibitem{heathbrown} 
D.R. Heath-Brown, 
A mean value estimate for real character sums. {\em  Acta Arith.} {\bf 72} (1995), 235--275.

\bibitem{IwKo} 
H. Iwaniec and E. Kowalski, {\em Analytic number theory}. American Math.\ Soc.\ Colloq.\ Pub.\ {\bf 53}, AMS, 2004. 



\bibitem{loughran}
D. Loughran, 
Manin's conjecture for a singular sextic del Pezzo surface.
{\em J.\ Th\'eorie Nombres Bordeaux}  {\bf 22} (2010), 675--701.

\bibitem{nagell} T. Nagell, 
{\em Introduction to number theory}. AMS Chelsea, 1951.

\bibitem{NS}
A. Nevo and P. Sarnak,
Prime and almost prime integral points on principal homogeneous
spaces.
{\em Acta Math.} {\bf 205} (2010), 361--402.


\bibitem{pierce} L. Pierce, 
The $3$-part of class numbers of quadratic fields. {\em  
J. London Math. Soc.} {\bf 71} (2005), 579--598.

\end{thebibliography}
\end{document}